%%================================================================= Preamble ==
\documentclass[leqno, 12pt, a4paper]{amsart}

%%========================================================= Packages Loading ==
\usepackage{amssymb}
\usepackage{amsthm}
\usepackage{amsmath}
\usepackage{amsfonts}
\usepackage{amscd}
\usepackage{array}
\usepackage{float}
\usepackage{cite}
\usepackage{showlabels}
\usepackage{color}
\usepackage{enumerate}
\usepackage{hyperref}

\numberwithin{equation}{section}

%%============================================================ Theorem Style ==
\theoremstyle{definition}
\newtheorem{defn}{Definition}[section]

\newtheorem{ex}[defn]{Example}

\newtheorem{ques}[defn]{Question}

\theoremstyle{plain}
\newtheorem{prop}[defn]{Proposition}
\newtheorem{lem}[defn]{Lemma}

\newtheorem{cor}[defn]{Corollary}

\theoremstyle{remark}
\newtheorem{rem}[defn]{Remark}

%%=============================================================== Math Fonts ==
\newcommand{\bA}{\mathbb{A}}

\newcommand{\bF}{\mathbb{F}}

\newcommand{\bP}{\mathbb{P}}
\newcommand{\bQ}{\mathbb{Q}}

\newcommand{\bZ}{\mathbb{Z}}

\newcommand{\cB}{\mathcal{B}}

\newcommand{\cF}{\mathcal{F}}

\newcommand{\cH}{\mathcal{H}}

\newcommand{\cL}{\mathcal{L}}
\newcommand{\cM}{\mathcal{M}}
\newcommand{\cN}{\mathcal{N}}
\newcommand{\cO}{\mathcal{O}}

\newcommand{\cT}{\mathcal{T}}

%%================================================ Notations & Terminologies ==

%\newcommand\PIC{{\text{\rm Pic}}(X)}

%\newcommand\rank{{\text{\rm rank}}}

\newcommand{\codim}{\operatorname{codim}}

\newcommand{\vol}{\operatorname{Vol}}

\newcommand{\lra}{\longrightarrow}

\def\ni{\noindent}

\address[]{Department of Mathematics, National Taiwan University, Taipei 10617, Taiwan
}

\address[]{National Center for Theoretical Sciences, Mathematics Division, Taipei 10617, Taiwan
}
\email{jkchen@ntu.edu.tw}
\address[]{Department of Mathematics, National Taiwan University, Taipei 10617, Taiwan (R.O.C.)
}
\email{cjlai72@math.ntu.edu.tw}
%%===================================================================== Body ==
\begin{document}
\title{Varieties of general type with small volumes}
\author{Jungkai Alfred Chen, Ching-Jui Lai}
\date{\today}
\maketitle

\begin{abstract}
Generalize Kobayashi's example for the Noether inequality in dimension three, we provide examples of $n$-folds of general type with small volumes.
\end{abstract}

%%-------------------------------------------------------------------- sec.1 --

\section{Introduction}

From the point of view of classification theory, the following three type of algebraic varieties are considered to be the building blocks: varieties of Fano-type, varieties with $\kappa=0$, and varieties of general type. Unlike the other two types, varieties of general type can not be bounded. For example, curves of general type are those curves of genus $g\ge 2$, where $g$ can be arbitrarily large.  Nevertheless, the classical theory of surfaces shows that there are some relations between some fundamental invariants. The Bogomolov-Miyaoka-Yau inequality shows that $c_1^2 \le 3c_2$ and Noether inequality gives $K^2 \ge 2p_g -4$.

%It is thus natural and interesting to ask what the distribution of birational invariants is and how these invariants are related.

It is thus natural and interesting to study the geography of higher dimensional varieties of general type, including the distribution of birational invariants and relations among invariants.
There are some recent results in dimension three, due to Kobayashi, Meng Chen and many others. Let $d_1$ denotes the dimension of image of the canonical map $\varphi_{|K_X|} \colon X \dashrightarrow \bP^{p_g(X)-1}$. In \cite{K}, Kobayashi proved that for an $n$-dimensional smooth projective variety of general type, $\vol(X) \ge 2 (p_g(X)-n)$ provided $d_1=\dim X$. 　
Hence in particular, for threefolds of general type with $d_1=3$, it follows that $\vol(X) \ge 2 (p_g(X)-3)$. However, Kobayashi constructed examples of threefolds of general type with $d_1=2$ such that $\vol(X)= \frac{4}{3} p_g(X) - \frac{10}{3}$. A recent result of Meng Chen and the first-named authors prove that $\vol(X) \ge  \frac{4}{3} p_g(X) - \frac{10}{3}$ for any threefold of general type whose minimal model is Gorenstein, cf. \cite{CC}.

The purpose of this note is to explore the higher dimensional analogue of Noether inequality.
We would like to remark the essential difference and difficulty in dimension three or higher is that its minimal model is no longer smooth. Therefore, the canonical volume $\vol(X):=\vol(K_X)$ can be a small positive rational number. This leads to the following two questions.

\begin{ques}
Is there a Noether-type inequality for varieties of general type of dimension $m$? More precisely, fix dimension $m$, does there exists $a_m, b_m>0$ such that $\vol(X) \ge a_m p_g(X)-b_m$?
\end{ques}

This question is known only up to dimension $2$, and partially known in dimension $3$.
One might want to ask the following weaker question, which is known up to dimension $3$.

\begin{ques}
Fix dimension $m$. Does there exists $a'_m, b'_m>0$ such that $\vol(X) \ge a'_m p_g(X)-b'_m$ for $m$-dimensional varieties of general type with Gorenstein minimal model?
\end{ques}

\begin{ques} \label{lim}
Suppose that the answer to above questions are positive for all $m$. What are $\displaystyle{\lim_{m \to \infty} a_m}$ and $\displaystyle{\lim_{m \to \infty} a'_m}$?
\end{ques}

\begin{ex}
Let $X=X_{28} \subset \bP(1,3,4,5,14)$ be a general weighted hyperplane of degree $28$. If is known that $\vol(X)=\frac{1}{30}$ and $p_g(X)=1$. Let $C$ be a curve of genus $\ge 2$ and $Z=X \times C$. Then $p_g(Z)=g(C)$ and $\vol(Z)=\frac{8}{30} (p_g(Z)-1)$.

In the same spirit, let $X$ be an $m$-dimensional variety $X$ with $p_g(X)=1$ and small volume. Let $Z=X \times C$. Since $$\vol(Z)=(m+1) \vol(X) (2g(C)-2)$$ and $p_g(Z)=g(C)$, it is expected that there is an $m$-dimensional variety $X$ with $p_g(X)=1$ and $\vol(X)$ is small enough such that $\displaystyle{\lim_{m \to \infty} (m+1){\vol(X)}=0}$. Hence it is expected that $\displaystyle{\lim_{m \to \infty} a_m =0}$.
\end{ex}

In this article, we produce series of examples of varieties of general type with small slope $\frac{{\rm Vol}(X)}{p_g(X)}$.
Suppose that $p_g(X)\neq 0$, recall that $d_1$ is the dimension of the image of canonical map $\varphi_{|K_X|}(X)$.
%Assume $d_1=\dim X-1$, our first example gives higher dimensional varieties of general type satisfying a non-trivial Noether type inequality, cf. Proposition \ref{A}.
\theoremstyle{theorem}
\newtheorem*{main1}{Example A}
\begin{main1} For any $n\geq3$, there exists an $n$-dimensional minimal smooth projective variety $X$ of general type and $d_1=n-1$ such that
 $${\rm Vol}(X)=\frac{n+1}{n} p_g(X) - \frac{n^2+1}{n}.$$
\end{main1}

If $d_1\leq\dim X-2$, then we find a series of examples of $n$-dimensional varieties of general type, where the slope $\frac{{\rm Vol}(X)}{p_g(X)}$ goes to zero as $n$ increases (cf. Proposition \ref{B}).
\theoremstyle{theorem}
\newtheorem*{main2}{Example B}
\begin{main2}  For each $n\geq2$ and $k\geq1$, there exists an $(n+k+1)$-dimensional smooth projective variety $X$ of general type such that
        $d_1=n$ and 			
        $${\rm Vol}(X)=\frac{n+k+2}{2^k(n+1)}p_g(X)-\frac{(n^2+2n+2)+(n+2)k}{2^k(n+1)}.$$
\end{main2}

The following corollary answers Question \ref{lim} by fixing $n$ but letting $k$ increased.
\begin{cor}  Keep the notation as in Question \ref{lim}. Then $\displaystyle{\lim_{m \to \infty} a_m =0}$.
\end{cor}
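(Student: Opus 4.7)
The plan is to specialize Example~B to a one-parameter slice whose dimension grows linearly while the slope $\vol/p_g$ decays exponentially. Concretely, for each integer $m \geq 4$, I would set $n = 2$ and $k = m-3$ in Example~B, producing an $m$-dimensional variety $X_m$ of general type satisfying
\[
\vol(X_m) \;=\; \frac{m+1}{3 \cdot 2^{m-3}}\, p_g(X_m) \;-\; c_m,
\]
where $c_m = (4m-2)/(3 \cdot 2^{m-3})$ is an intercept depending only on $m$ (obtained by plugging $n=2, k=m-3$ into the formula of Example~B).

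The second step is to let $p_g(X_m) \to \infty$ within this family while holding $(n,k)$ fixed. Since Example~B is constructed by a product-type procedure (as suggested by the prototype $Z = X \times C$ in the introduction), $p_g$ can be scaled arbitrarily by enlarging the genus of an auxiliary curve factor, all without disturbing the slope $(m+1)/(3 \cdot 2^{m-3})$. The Noether-type inequality $\vol(X_m) \geq a_m p_g(X_m) - b_m$ referenced in Question~\ref{lim} then rearranges to
\[
a_m \;\leq\; \frac{m+1}{3 \cdot 2^{m-3}} \;+\; \frac{b_m - c_m}{p_g(X_m)}.
\]
Sending $p_g(X_m) \to \infty$ absorbs the second term and yields $a_m \leq (m+1)/(3 \cdot 2^{m-3})$, which plainly tends to $0$ as $m \to \infty$.

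The only routine verification I foresee is that these examples admit a Gorenstein minimal model, so that Question~\ref{lim} genuinely controls them. For products $Y \times C$ with $Y$ a Gorenstein minimal model and $C$ a smooth curve of genus $\geq 2$ this is automatic, since $K_{Y \times C}$ stays nef and Gorensteinness is preserved under products with smooth factors. The conceptual driver of the argument is the choice $n = 2$: one keeps the non-curve factor as small as possible while letting the number of curve factors encoded by $k$ grow linearly with the dimension $m$, so that the exponential denominator $2^k$ beats the linear numerator $n+k+2$. This is the only nontrivial design decision in the proof; once it is made, the estimate and the limit are immediate.
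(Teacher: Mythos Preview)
Your overall strategy---fix $n$, let $k$ grow, and observe that the slope in Example~B tends to zero---is exactly what the paper does. However, two points in your write-up are wrong and should be corrected.

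First, Example~B is \emph{not} a product with a curve. It is a double cover of a tower of $\bP^1$-bundles, and the free parameter controlling $p_g$ is the integer $e\ge 2$ coming from the base Hirzebruch surface $\bF_e$; by Proposition~\ref{B}(a) one has $p_g(X_B)=(n+1)e-n$. So your mechanism ``enlarge the genus of an auxiliary curve factor'' does not exist in this construction. The conclusion you need (that $p_g$ can be made arbitrarily large while the slope stays fixed) is still true, but it comes from letting $e\to\infty$, not from a curve factor.

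Second, your ``routine verification'' that the examples have a Gorenstein minimal model is both unnecessary and likely false. The corollary concerns $a_m$, the constant from Question~1.1 for \emph{all} varieties of general type, not $a'_m$ from Question~1.2. The paper explicitly says it does not claim Example~B has a Gorenstein minimal model, and for this very reason leaves $\lim a'_m$ open. Your product argument for Gorensteinness is moot anyway since the construction is not a product. Once you drop this paragraph and replace the curve-genus mechanism by varying $e$, your proof coincides with the paper's.
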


Note that we do not claim the constructed varieties in either Example A or B have Gorenstein minimal models. Therefore, we don't know $\displaystyle{\lim_{m \to \infty} a'_m}$  yet.
% examples partially answered the first two questions and prove that $\displaystyle{\lim_{m \to \infty} a_m = \lim_{m \to \infty} a'_m=0}$.

Our construction can be summarized in the following diagram:
$$\begin{array}{ccccccc}
&&& X_A& &&X_B \\
&&& \downarrow{\tau_A}& &&\downarrow{\tau_B}\\
Y_1\leftarrow Y_2 & \cdots &\leftarrow &Y_n \leftarrow W \leftarrow W_1 &\cdots &\leftarrow& W_k
\end{array}
$$
where $Y_1=\bP^1$, $Y_2=\bF_e$, all the horizontal maps are $\bP^1$-bundles, and vertical maps are double covers (cf. Section 4 and 5).

%Unlike Kobayashi's example, which is a smooth minimal model of another suitably constructed smooth variety, our examples are not necessarily minimal and hence we use $\vol(X):={\rm Vol}(K_X)$ instead of $K_X^{\dim X}$. %Certainly we can take the minimal models, but then we don't know if the resulted variety remains Gorenstein.
%Moreover, it seems not easy to construct {\it smooth} minimal varieties of general type following Kobayashi's idea.

This paper is organized as the following: In Section 2, we recall a proposition of Kobayashi, indicating the relation of the Noether inequality and varieties of minimal degree, for which our construction is based on. In Section 3, we take towers of $\bP^1$-bundles over a variety of minimal degree and establish some basic properties. In Section 4 and 5, we construct Example A and B respectively. In Section 6, some related open questions are discussed.

\ni\textbf{Acknowledgement.} We would like to thank Meng Chen for the helpful discussion. This work is done partially during the second author's visit at Research Institute for Mathematical Sciences and he thanks for the warm hospitality of the institute. The second author is also supported by National Center of Theoretical Sciences in Taiwan and funded by MOST-104-2115-M-002-011-MY2.

\section{polarized varieties of minimal degree}

Given a non-degenerate projective variety  $X^n \subset \bP^r$ of dimension $n$. It is well-known that $\deg(X) \ge \codim(X, \bP^r)+1$ (cf. \cite{EH}). If the equality holds, then $X$ is called a {\it variety of minimal degree}. Suppose that  $\cL$ is a free and big line bundle on $X$. Then $\varphi_\cL$ defines a generically finite morphism.
Let $Z=\varphi_{\cL}(X)$ be its image in projective space. Then $Z$ is a variety of degree $\cL^n$. Recall that the $\Delta$-genus of a polarized variety $(X, \cL)$ introduced by  T. Fujita is defined as $$\Delta(X, \cL) =\cL^n+n-h^0(X, \cL).$$ It is clear that the image $Z$ is a variety of minimal degree if and only if $(X, \cL)$ is a polarized variety with $\Delta$-genus $\Delta(X, \cL)=0$. Moreover, polarized varieties with $\Delta$-genus zero are classified in \cite{EH, F1,F2,F3}.

The following result of Kobayashi reveals the connection between variety of minimal degree and varieties of general type with small slopes.

\begin{prop}
Let $X$ be an $n$-dimensional minimal variety of general type with at most canonical singularities such that $d_1=n$. Then $\vol(X) \ge 2 (p_g(X) -n)$. Equality holds only if $X$ is Gorenstein and admits a $2:1$ covering over a variety of minimal degree.
\end{prop}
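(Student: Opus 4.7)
The plan is to reduce the inequality to Clifford's theorem applied on a general curve cut out by hyperplane sections of the canonical image. Let $f : X' \to X$ be a log resolution of the base locus of $|K_X|$, and write $|K_{X'}| = |M| + F'$ with $|M|$ base-point free. Since $X$ has canonical singularities, $K_{X'} = f^*K_X + E$ with $E \ge 0$ effective exceptional, and consequently $f^*K_X = M + F$ with $F := F' - E \ge 0$. The morphism $\varphi := \varphi_{|M|} : X' \to \bP^{p_g(X)-1}$ has image $Z$ of dimension $d_1 = n$, so $\varphi$ is generically finite; non-degeneracy of $Z$ in $\bP^{p_g-1}$ gives $\deg(Z) \ge p_g(X) - n$. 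A telescoping expansion
\[
(f^*K_X)^n - M^n \;=\; \sum_{i=0}^{n-1} M^i \cdot F \cdot (f^*K_X)^{n-1-i},
\]
in which each summand is non-negative (represent $M^i$ by a general complete intersection, restrict to a component of $F$, and take the top self-intersection of the nef class $f^*K_X$), yields $\vol(X) = (f^*K_X)^n \ge M^n = \deg(\varphi)\cdot\deg(Z)$.

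The core step is a Clifford argument on a curve. Take $D_1, \ldots, D_{n-1} \in |M|$ general and set $C := D_1 \cap \cdots \cap D_{n-1}$; by Bertini $C$ is a smooth irreducible curve in $X'$. Iterated restriction via the short exact sequences $0 \to \cO \to \cO(M) \to \cO(M)|_{D_i} \to 0$, combined with $h^0(\cO) = 1$ on each intermediate connected smooth subvariety, yields
\[
h^0(C, M|_C) \;\ge\; p_g(X) - n + 1.
\]
By adjunction $K_C = (K_{X'} + (n-1)M)|_C$, and since $K_{X'} = M + F'$ with $F' \ge 0$, the class $K_C - M|_C = ((n-1)M + F')|_C$ is effective, so $M|_C$ is a special divisor. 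Clifford's inequality then gives $M^n = \deg(M|_C) \ge 2\bigl(h^0(M|_C) - 1\bigr) \ge 2(p_g(X) - n)$, and combined with the first paragraph we conclude $\vol(X) \ge 2(p_g(X) - n)$.

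For the equality statement, every intermediate inequality must be sharp. Clifford saturation on $M|_C$, combined with $p_g(X) \ge n+1$ (forced by $d_1 = n$), rules out $M|_C = \cO_C$; the case $M|_C = K_C$ is incompatible with $X$ being of general type through the adjunction identity above, leaving $C$ hyperelliptic with $M|_C$ a multiple of the hyperelliptic $g^1_2$. The hyperelliptic involution on a general $C$ then propagates to a birational involution of $X'$ over $Z$, exhibiting $\varphi$ as generically $2:1$ onto $Z$; thus $\deg(\varphi) = 2$ and $\deg(Z) = p_g(X) - n$, so $Z$ is a variety of minimal degree. Sharpness of the telescoping $(f^*K_X)^n = M^n$ forces $F \cdot M^{n-1} = 0$ and the analogous higher terms to vanish, which together with the $2:1$ cover structure over a smooth variety of minimal degree forces $K_X$ to be Cartier, so $X$ is Gorenstein. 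The main obstacle is this equality analysis --- specifically, globalizing the hyperelliptic involution from the cutdown curve $C$ to a birational involution of $X$ and ruling out the case $M|_C = K_C$ --- for which one invokes the classification of polarized varieties of $\Delta$-genus zero recalled above.
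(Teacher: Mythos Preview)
The paper does not give its own proof of this proposition; it is stated as a result of Kobayashi and attributed to \cite{K}. So there is no in-paper argument to compare against, and your write-up must stand on its own.

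Your proof of the inequality is correct and is essentially Kobayashi's argument: resolve the base locus, cut down by $n-1$ general members of the free part to a smooth irreducible curve $C$, observe $M|_C$ is special via adjunction, and apply Clifford. The telescoping bound $(f^*K_X)^n\ge M^n$ and the restriction inequality $h^0(C,M|_C)\ge p_g-n+1$ are handled properly.

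The equality discussion, however, has a genuine gap. Your deduction that $\deg\varphi=2$ and that $Z$ has minimal degree is defensible once you note that equality in Clifford forces $h^0(C,M|_C)=p_g-n+1$, so the restriction $H^0(X',M)\to H^0(C,M|_C)$ is surjective and $\varphi|_C$ is given by the complete series $|r\,g^1_2|$, hence is $2{:}1$ onto a rational normal curve; since $\deg(\varphi|_C)=\deg\varphi$ for general $C$, this gives $\deg\varphi=2$ directly, without needing to ``propagate a birational involution.'' The elimination of $M|_C=K_C$ is also fine for $n\ge 2$: adjunction gives $K_C\sim (nM+F')|_C$, so $M|_C\sim K_C$ would force $(n-1)M^n+\deg(F'|_C)=0$, impossible since $M^n>0$.

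What is not justified is the Gorenstein conclusion. From $(f^*K_X)^n=M^n$ you correctly get $F\cdot M^{n-1}=0$ and the other mixed terms vanish, but the sentence ``together with the $2{:}1$ cover structure over a smooth variety of minimal degree forces $K_X$ to be Cartier'' is an assertion, not an argument. Varieties of minimal degree need not be smooth (cones occur), the map $\varphi$ is only generically finite and lives on $X'$ rather than $X$, and no mechanism is given by which vanishing of those intersection numbers on the resolution forces $K_X$ to be Cartier on $X$. This step requires real work---one has to control the fixed part and the exceptional locus simultaneously and use the explicit double-cover description---and as written it is missing.
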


Two dimensional projective varieties of minimal degree are cone over rational normal curves, the embedding of Hirzebruch surface $\bF_e=\bP_{\bP^1}(\cO_{\bP_1}\oplus\cO_{\bP^1}(e))$ by a suitably chosen linear system, or the Veronese surface $\bP^2\subseteq\bP^5$ (cf. \cite[Exercises IV.18.(4)]{Bea}).

%Kobayashi constructs a series of threefolds by taking a double cover of a $\bP^1$-bundle over $F_e$ with wisely chosen data to reach varieties with small volumes. Generalizing his construction, we take towers of $\bP^1$-bundles of $\bF_e$ and then a double cover to find our examples. The geometry of towers of $\bP^1$-bundles relevant to this work is studied in the next section.

\section{Towers of $\bP^1$-bundles}
The purpose of this section is to construct towers of $\bP^1$-bundles and study some basic properties.

Given a lower triangular matrix $(c_{i,j})_{1\le i, j \le n-1}$ with integer entries, we can construct a tower of $\mathbb{P}^1$-bundles successively as follows. Start with $Y_1=\mathbb{P}^1$, $\Sigma_1=\cO_{Y_1}(1)$ and $\cL_1=\cO_{Y_1}(c_{11})$, then we construct $Y_2=\bP_{Y_1} ( \cO_{Y_1} \oplus \cL_1^{-1})$ as a $\bP^1$-bundle over $Y_1$. Let $\Sigma_2 \subset Y_2$ be the distinguished section and fix $\cL_2=\cO_{Y_2}(c_{21} \pi_{21}^*\Sigma_1+ c_{22} \Sigma_2)$.  We then construct $Y_3=\bP_{Y_2} ( \cO_{Y_2} \oplus \cL_2^{-1})$ as a $\bP^1$-bundle over $Y_2$. Inductively, we can construct all the way to get $Y_n$ with distinguished section $\Sigma_n$.

For $k\geq2$, the distinguished section $\Sigma_k$ on $Y_k$ is defined by the quotient $\cL^{-1}_{k-1}$ of $ \cO_{Y_{k-1}} \oplus \cL_{k-1}^{-1}$ on $Y_{k-1}$, which satisfies $\Sigma_k\sim \cO_{Y_k}(1)$ on $Y_k$ with normal bundle $\cN_{\Sigma_k/Y_k}=\Sigma_k|_{\Sigma_k}\cong\cL_{k-1}^{-1}$.

Let $\pi_{i,j}$ denote the induced map from $Y_i$ to $Y_j$ and $\pi_i$ denotes $\pi_{n,i}$ for simplicity. Also, for brevity, we abuse the notation of $\Sigma_i$ with $\pi_{n,i}^*\Sigma_i$ and all other divisors or line bundles as well. It is easy to see that $\textrm{Pic}(Y_n) \cong \bZ^n$ with $\{\Sigma_1, \Sigma_2,...,\Sigma_n\}$ as a set of generators. Hence, we can always write a line bundle on $Y_n$ as $\cM=\cO_{Y_n} (b_1\Sigma_1+b_2\Sigma_2+\cdots b_n \Sigma_n)$ and simply denote it as $\cO_{Y_n}(b_1,\ldots, b_n)$.
An easy inductive computation shows that
\begin{equation}
  -K_{Y_n}=(2+\sum_{i=1}^{n-1} c_{i,1}, 2+\sum_{i=2}^{n-1} c_{i,2}, \ldots, 2+ c_{n-1,n-1}, 2).
\end{equation}

\begin{ex}\label{byn} We consider $Y_n$ to be the $n$-dimensional tower of $\bP^1$-bundle with the following building data of $(n-1) \times (n-1)$ matrix,
$$(c_{i,j})_{1\le i, j \le n-1}=\left(
\begin{array}{ccccc}
e & 0 & 0 & \ldots & 0\\
e & 1 & 0 & \ldots & 0 \\
& & \vdots & &  \\
e & 1 & \ldots & 1 & 0 \\
e& 1 & \ldots & 1 & 1 \\
\end{array}
\right)$$

Then 
\begin{equation}%\label{ky}
-K_{Y_n}=((n-1)e+2, \underbrace{n, n-1,\ldots, 3,2}_{n-1}).
\end{equation}

For $i=1,...,n$, we also consider line bundles $\cL_i=\cO_{Y_i}(e, \underbrace{1, \ldots, 1}_{i-1})$ on $Y_i$ or its pull-back on $Y_n$.
\end{ex}

\begin{prop}\label{yn} Keep the notation as in Example \ref{byn}.
%. Let $Y_n$ and $\cL_n=\cO_{Y_n}(e,1,\dots,1)$. %be as in Example 1.
The the line bundle $\cM=b_1\cL_1+\cdots+b_n\cL_n$ satisfies the following properties:%\ref{ex1}:
\begin{enumerate}[$(a)$]
	\item $\cL_n^n=e$.
	\item $h^0(Y_n, \cL_n)=e+n$.
	\item If $b_i \ge 0$ for all $i$, then $H^1(Y_n, \cM)=0$.
	\item If $b_i \ge 0$ for all $i$, then $|\cM|$ is free.
	\item $\cL_n$ is free and big.

	\item The polarized variety $(Y_n, \cL_n)$ has $\Delta$-genus $0$. In other words, the image of $\varphi_{\cL_n}$ is a 		variety of minimal degree.
	\item The morphism $\varphi_{\cL_n}$ is a birational morphism contracting $\Sigma_n$.
	\item $\cL_n^{n-1} \cdot \Sigma_1=1$ and $\cL_n^{n-1} \cdot \Sigma_i=0$ for $2 \le i \le n$.
\end{enumerate}
\end{prop}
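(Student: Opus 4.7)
I would prove (a)--(h) by a simultaneous induction on $n$, centered on the top $\bP^1$-bundle $\pi := \pi_{n,n-1}\colon Y_n = \bP_{Y_{n-1}}(\cO \oplus \cL_{n-1}^{-1}) \to Y_{n-1}$. The key identity driving nearly everything is $\cL_n \cdot \Sigma_n = 0$, equivalently $\cL_n|_{\Sigma_n} \cong \cO_{\Sigma_n}$: it follows from $\cL_n = \pi^*\cL_{n-1} + \Sigma_n$ together with the stated normal bundle $\Sigma_n|_{\Sigma_n} \cong \cL_{n-1}^{-1}$. The combinatorial form $\Sigma_n^k = (-\pi^*\cL_{n-1})^{k-1}\Sigma_n$ for $k \geq 1$ is the workhorse for (a) and (h); the trivialization $\cL_n|_{\Sigma_n} \cong \cO$ immediately yields the contraction part of (g).

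For (a), expand $\cL_n^n = (\pi^*\cL_{n-1} + \Sigma_n)^n$ binomially. Using the identity to reduce each $\Sigma_n^k$-term, noting $\cL_{n-1}^n = 0$ (pulled back from an $(n-1)$-dimensional base), and collapsing via $\sum_{k=1}^n \binom{n}{k}(-1)^{k-1} = 1$, one gets $\cL_n^n = \cL_{n-1}^{n-1}\cdot\Sigma_n$, which the projection formula ($\pi_*\Sigma_n = 1$) identifies with $\cL_{n-1}^{n-1}$ on $Y_{n-1}$; induction then gives $e$. For (b), the push-down $\pi_*\cL_n = \cL_{n-1}\otimes\pi_*\cO(\Sigma_n) = \cL_{n-1}\oplus\cO$ yields $h^0(Y_n, \cL_n) = h^0(Y_{n-1}, \cL_{n-1}) + 1$, iterating to $e + n$.

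For (c) and (d) the induction must be framed uniformly over the whole family of nonnegative $\cL_i$-combinations, since pushforward splits one line bundle into several summands. Writing $\cM = \pi^*\cN + b_n\Sigma_n$ with $\cN = b_1\cL_1 + \cdots + b_{n-2}\cL_{n-2} + (b_{n-1}+b_n)\cL_{n-1}$, one has $\pi_*\cM = \bigoplus_{k=0}^{b_n}(\cN - k\cL_{n-1})$; the $\cL_{n-1}$-coefficient $b_{n-1}+b_n-k$ stays in $[b_{n-1}, b_{n-1}+b_n]$, so each summand is again a nonnegative $\cL_i$-combination and the inductive hypothesis delivers $H^1$-vanishing and global generation on $Y_{n-1}$. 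With $R^1\pi_*\cO(b_n\Sigma_n) = 0$ for $b_n \geq 0$, the Leray spectral sequence gives (c); the tautological surjection $S^{b_n}\pi^*(\cO\oplus\cL_{n-1}^{-1}) \twoheadrightarrow \cO(b_n\Sigma_n)$ then promotes global generation of $\pi_*\cM$ to that of $\cM$, giving (d). Statement (e) is (d) applied to $\cL_n$, combined with $\cL_n^n = e \geq 1$ for bigness.

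Statement (f) is a substitution: $\Delta(Y_n, \cL_n) = e + n - (e+n) = 0$, so the image is a variety of minimal degree by Fujita's classification; matching $\deg(\text{image}) = e = \cL_n^n$ forces $\varphi_{\cL_n}$ to be birational, completing (g). For (h), the same binomial reduction used in (a), one degree lower, gives $\cL_n^{n-1} = \cL_{n-1}^{n-1} + \cL_{n-1}^{n-2}\Sigma_n$ as $1$-cycles in $Y_n$. The first piece is a multiple of fibers of $\pi$ and meets any pulled-back $\Sigma_i$ ($i \leq n-1$) trivially; the second, via projection formula, reduces to $\cL_{n-1}^{n-2}\cdot\Sigma_i$ on $Y_{n-1}$, which is exactly the inductive instance of (h). For $i = n$, $\cL_n \cdot \Sigma_n = 0$ handles $\cL_n^{n-1}\cdot\Sigma_n$ directly, and the base case $n = 2$ ($Y_2 = \bF_e$) is classical ($\Sigma_1^2 = 0$, $\Sigma_1\cdot\Sigma_2 = 1$, $\Sigma_2^2 = -e$). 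The main obstacle is framing the inductive package in (c)--(d) uniformly enough that the $\bigoplus_k$-decomposition of the pushforward lands back inside the inductive family; once that is in place, everything else is bookkeeping from the single identity $\cL_n \cdot \Sigma_n = 0$ together with the standard $\bP^1$-bundle formulas $\pi_*\cO(m) = S^m(\cO\oplus\cL_{n-1}^{-1})$ and $R^1\pi_*\cO(m) = 0$ for $m \geq 0$.
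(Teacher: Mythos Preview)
Your proof is correct, and for (a), (b), (e), (f) it coincides with the paper's. For the remaining parts you take a genuinely different route.

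For (c), the paper restricts to $\Sigma_n$ and runs a double induction on $\dim$ and on $b_n$ via the sequence $0 \to \cM - \Sigma_n \to \cM \to \cM|_{\Sigma_n} \to 0$; you instead push forward once to $Y_{n-1}$, observe that $\pi_*\cM$ splits as a direct sum of nonnegative $\cL_i$-combinations, and invoke Leray with $R^1\pi_* = 0$. For (d), the paper first produces a general $\Gamma \in |\cL_n|$, argues geometrically that $\Gamma$ is irreducible, disjoint from $\Sigma_n$, and isomorphic to $Y_{n-1}$, then uses restriction sequences to both $\Gamma$ and $\Sigma_n$; you bypass this by lifting global generation of $\pi_*\cM$ through the tautological surjection $\pi^*\pi_*\cM \twoheadrightarrow \cM$. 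For (g), the paper uses $\Gamma$ again to identify $\varphi_{\cL_n}|_\Gamma$ with $\varphi_{\cL_{n-1}}$ and induct; you deduce birationality from the degree count $e = \cL_n^n = (\deg\varphi)\cdot\deg(\text{image}) \ge (\deg\varphi)\cdot e$. For (h), the paper restricts to $\Gamma$ and reduces to $\cL_{n-1}^{n-2}\cdot\Sigma_i$ on $Y_{n-1}$; you reach the same reduction by expanding $\cL_n^{n-1}$ binomially and splitting off the fiber-class term.

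Your approach is cleaner and more uniform for this proposition in isolation: a single induction on dimension, no auxiliary geometry. What the paper's approach buys is the explicit section $\Gamma \cong Y_{n-1}$ disjoint from $\Sigma_n$, which it reuses later (e.g.\ in Proposition~\ref{wn+1} and the computation $\cL_{n+1}|_{\Gamma'} \cong 2\cL_n$ in Proposition~\ref{A}); if you continue with your method you would need to supply that section separately when it is needed downstream.
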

\begin{proof}
Write $\cL_n=\Sigma_n+\pi_{n-1}^*\cL_{n-1}$, then
$$\begin{array}{ll} \cL_n^n&= (\Sigma_n+\pi_{n-1}^* \cL_{n-1})^n \\
                           &= \sum_{i=0}^n C^n_i (\pi_{n-1}^* \cL_{n-1})^{n-i} \Sigma_n^{i} \\
                           &=     \sum_{i=1}^{n} C^n_i (\pi_{n-1}^* \cL_{n-1})^{n-i} \Sigma_n^{i} \\
                           &=  \sum_{i=1}^{n} C^n_i  \cL_{n-1}^{n-i} (-\cL_{n-1})^{i-1} \\
                         %  &= (\sum_{i=1}^{n} C^n_i (-1)^{i-1} ) \cL_{n-1}^{n-1}
                           \end{array}.$$
Since $\sum_{i=1}^{n} C^n_i (-1)^i=(1+(-1))^n-1=-1$,
%$$-1+\sum_{i=1}^{n} C^n_i (-1)^{i-1}  = -1-1 (\sum_{i=1}^{n} C^n_i (-1)^{i}) = - (\sum_{i=0}^{n} C^n_i (-1)^{i})=0,$$
by induction $\cL_n^n=\cL_2^2=e.$ This proves $(a)$.

We write $\cL_n=\Sigma_n+\pi_{n-1}^*\cL_{n-1}$ again and by projection formula $$h^0(Y_n, \cL_n) = h^0(Y_{n-1}, \cL_{n-1} \oplus \cO_{Y_{n-1}}) = h^0(Y_{n-1}, \cL_{n-1})+1.$$
Since $h^0(Y_2, \cL_2)=e+2$, one sees $(b)$ by induction.

We will prove $(c)$ by induction on dimension and on $b_i$. The statement is standard on $Y_1=\bP^1$. Since $Y_i$'s are rational, $H^1(Y_i, \cO_{Y_i})=0$ for all $i$. Suppose now that $\cM_n=\sum_{i=1}^{n} b_i \cL_i$ with $b_i \ge 0$.
 We may and do assume that $b_n >0$, otherwise it is reduced to lower dimensional case.
Since $\cL_n-\Sigma_n=\pi_{n-1}^*\cL_{n-1}$, we can write $\cM_n-\Sigma_n=\cM'$ with
$$\cM'=\sum_{i=1}^{n-2} b_i \pi_i^* \cL_i + (b_{n-1}+1) \pi_{n-1}^*\cL_{n-1} + (b_n-1)\cL_n.$$ The standard restriction sequence to $\Sigma_n$ now reads:
$$ 0 \to \cM' \stackrel{+\Sigma_n}{\lra} \cM_n \to \cM_n|_{\Sigma_n} \to 0. \eqno{(3.3)}$$
On $Y_{n-1}$, we consider $\cM_{n-1}=\sum_{i=1}^{n-1} b_i \pi_i^* \cL_i$.
 Through the isomorphism $\Sigma_n \cong Y_{n-1}$ and $\cL_n|_{\Sigma_n}=0$, one sees that  $\cM_n|_{\Sigma_n} \cong \cM_{n-1}$. It follows that $H^1(Y_n, \cM_n)=0$ by induction on dimension and on $b_n$.

 We now prove  the freeness of $\cM_n$, which is $(d)$. Indeed, it suffices to prove the freeness of $\cL_n$.
 By $(b)$, one sees that $|\cL_{n-1}|+\Sigma_n \subsetneq |\cL_n|$. Pick a general $\Gamma \in |\cL_n|$ which does not contain $\Sigma_n$. We claim that $\Gamma$ is irreducible and $\pi_{n-1}|_\Gamma \colon \Gamma \cong Y_{n-1}$ is an isomorphism.

 To this end, we write $\Gamma=\Gamma_h+\Gamma_v$, where $\Gamma_h$ (resp. $\Gamma_v$) denotes the horizontal (resp. vertical) part with respect to $\pi_{n-1}$. Since $\cL_n \cdot l =1$ for a fiber $l$ of $\pi_{n-1}$, the divisor $\Gamma_h$ is irreducible. By our choice of $\Gamma$, $\Gamma_h \cap \Sigma_n$ is an effective divisor (possibly zero) on $\Sigma_n \cong Y_{n-1}$, hence so is $\Gamma \cap \Sigma_n$.

Note that $\Gamma_v$ must be empty. Otherwise, $\Gamma_v \cap \Sigma_n$ is effective and non-empty. Since $\cL_n|_{\Sigma_n}=0$, for any ample divisor $H$ on $Y_n$, this leads to $$0=H^{n-2} \cdot \Gamma \cdot \Sigma_n \ge H^{n-2} \cdot (\Gamma_v \cap \Sigma_n) >0,$$ a contradiction.
 Similarly, $\Gamma_h \cap \Sigma_n$ is also empty.
 Therefore, $\Gamma=\Gamma_h$ is irreducible and smooth as it is then isomorphic to $Y_{n-1}$ by $\Gamma \cdot l =1$.

Since $\cL_n|_{\Sigma_n}=0$, the section $\Gamma$ is disjoint from $\Sigma_n$ and $\cL_n|_{\Gamma}=(\pi^*\cL_{n-1}+\Sigma_n)|_\Gamma\cong\cL_{n-1}$. We have the following exact sequence:
$$ 0 \to \cO_{Y_n} \stackrel{+\Gamma}{\lra} \cL_n \stackrel{}{\lra} \cL_{n-1} \to 0, \eqno{(3.4)}$$ where the last map is the restriction to $\Gamma\cong Y_{n-1}$. By induction on dimension, we assume that $\cL_{n-1}$ is free on $\Gamma \cong Y_{n-1}$.
 Since $H^1(Y_n, \cO_{Y_n})=0$, it follows that $\cL_n$ is free on $\Gamma$. Let $\Gamma$ moves, then one sees that $\cL_n$ is free on $Y_n$ away from $\Sigma_n$.

 Similarly, we consider the exact sequence by restricting to $\Sigma_n$
 $$ 0 \to \pi_{n-1}^* \cL_{n-1} \stackrel{+\Sigma_n}{\lra} \cL_n \to \cO_{\Sigma_n} \to 0. \eqno{(3.5)}$$
 Since  $H^1(Y_n, \pi_{n-1}^* \cL_{n-1})=0$, it follows that $\cL_n$ is free on $\Sigma_n$. This completes the proof of $(d)$.

The statement of $(e)$ and $(f)$ follow from $(a)$,$(b)$ and $(d)$ immediately.
By $(3.4)$ and induction, $\varphi_{\cL_{n}}|_\Gamma \cong \varphi_{\cL_{n-1}}$ defines a birational morphism on $\Gamma$, which extends to a birational morphism $\varphi_{\cL_n}$ on $Y_{n}$. As we saw in the proof of $(d)$,  $\varphi_{\cL_n}$ restricts to a constant map on $\Sigma_n$. This proves $(g)$.

To compute that $\cL_n^{n-1} \cdot \Sigma_i$, let $\Gamma \in |\cL_n|$ as before. Since  $\cL_n|_\Gamma = \cL_{n-1}$,  inductively we get $$ \cL_n^{n-1} \cdot_{Y_n} \Sigma_1 = \cL_{n-1}^{n-2} \cdot_{Y_{n-1}} \Sigma_1=\cdots=\cL_2 \cdot_{Y_2} \Sigma_1=1, $$ and $$ \cL_n^{n-1} \cdot_{Y_n} \Sigma_i = \cL_{n-1}^{n-2} \cdot_{Y_{n-1}} \Sigma_i=\cdots =\cL_i^{i-1} \cdot_{Y_i} \Sigma_i=0,$$
as $\cL_i|_{\Sigma_i}=0$ for $i\geq2$.
\end{proof}

\section{Construction of Example A}

\begin{ex} \label{w}
We keep the notation as in Section 3. We take $W=\bP_{Y_n}(\cO_{Y_n} \oplus \cL_n^{-2})$, which is $(n+1)$-dimensional. In other words, $W$ is constructed by using the following building data of $n \times n$ matrix
$$\left(
\begin{array}{ccccc}
e & 0 & 0 & \ldots & 0\\
e & 1 & 0 & \ldots & 0 \\
& & \vdots & &  \\
e & 1 & \ldots & 1 & 0 \\
2e& 2 & \ldots & 2 & 2 \\
\end{array}
\right)$$

Let $\Sigma_{n+1} \subset W$ be the distinguished section: $\Sigma_{n+1}\sim\cO_W(1)$ and $\cN_{\Sigma_{n+1}}=\Sigma_{n+1}|_{\Sigma_{n+1}}\cong\cL_n^{-2}$.
By (3.1), one has
$$-K_W= ((n+1)e+2,\underbrace{(n+2),(n+1),\cdots,5,4}_{n-1},2).$$
Also, we define
$$ \cL_{n+1}=(2e, \underbrace{2, 2, \ldots,2}_{n-1},1)=2\pi_{n}^*\cL_n+\Sigma_{n+1}. \eqno{(4.1)}$$
\end{ex}

In what follows, we usually identify $\Sigma_{n+1}$ with $Y_n$ and identify $\pi_n^* \cF|_{\Sigma_{n+1}}$ with $\cF$ on $Y_n$ for any sheaf $\cF$on $Y_n$.

\begin{prop} \label{wn+1}
Let $W$ be the variety as in Example \ref{w} and write a line bundle as $\cM=\sum_{i=1}^{n+1} b_i \cL_i$. We have the following similar properties as in Proposition \ref{yn}.
    \begin{enumerate}[$(a)$]
	\item $h^0(W, \cL_{n+1}) \ge 2$, $\Sigma_{n+1}$ is not a base divisor of $|\cL_{n+1}|$, and a general member $\Gamma \in |\cL_{n+1}|$ is smooth and isomorphic to $Y_n$.
        \item If $b_i \ge 0$ for all $i$, then $H^1(W_{n+1}, \cM)=0$.
        \item If $b_i \ge 0$ for all $i$, then $|\cM|$ is free.
    \end{enumerate}
\end{prop}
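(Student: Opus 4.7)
The key identity driving everything is that, under the identification $\Sigma_{n+1} \cong Y_n$ via $\pi_n|_{\Sigma_{n+1}}$,
\[\cL_{n+1}|_{\Sigma_{n+1}} \;=\; 2\pi_n^*\cL_n|_{\Sigma_{n+1}} + \Sigma_{n+1}|_{\Sigma_{n+1}} \;=\; 2\cL_n - 2\cL_n \;=\; \cO_{\Sigma_{n+1}}.\]
The proof parallels the inductive structure of Proposition~\ref{yn}, with $\pi_n \colon W \to Y_n$ in the role of $Y_n \to Y_{n-1}$ and $\Sigma_{n+1}$ in the role of the distinguished section.

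For $(a)$, I would start from the restriction sequence
\[0 \to 2\pi_n^*\cL_n \to \cL_{n+1} \to \cO_{\Sigma_{n+1}} \to 0.\]
Since Proposition~\ref{yn}(c) applied to $2\cL_n$ on $Y_n$ yields $H^1(W, 2\pi_n^*\cL_n) = H^1(Y_n, 2\cL_n) = 0$ (using $R^i\pi_{n*}\cO_W = 0$ for $i \ge 1$), the restriction map $H^0(W, \cL_{n+1}) \to H^0(\Sigma_{n+1}, \cO_{\Sigma_{n+1}}) = \bC$ is surjective. The resulting section $s$ has nonzero constant restriction to $\Sigma_{n+1}$, so $s$ does not vanish anywhere on $\Sigma_{n+1}$; this shows $\Sigma_{n+1}$ is not a base divisor, and combined with an independent divisor $\Sigma_{n+1} + \pi_n^*D$ for $D \in |2\cL_n|$, that $h^0 \ge 2$. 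For a general $\Gamma \in |\cL_{n+1}|$ not containing $\Sigma_{n+1}$, the restriction $\Gamma|_{\Sigma_{n+1}}$ is an effective divisor of class $\cO_{\Sigma_{n+1}}$, hence the empty divisor, so $\Gamma \cap \Sigma_{n+1} = \emptyset$. Because $\Gamma \cdot F = 1$ for a fiber $F$ of $\pi_n$, writing $\Gamma = \Gamma_h + \Gamma_v$, any vertical component would meet $\Sigma_{n+1}$ and is therefore absent; thus $\pi_n|_\Gamma \colon \Gamma \to Y_n$ is an isomorphism onto the smooth variety $Y_n$.

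For $(b)$, I would induct on $b_{n+1}$. The base case $b_{n+1} = 0$ reduces via Leray to $H^1(Y_n, \cM') = 0$ for $\cM' = \sum_{i=1}^n b_i\cL_i$, which holds by Proposition~\ref{yn}(c). For $b_{n+1} \ge 1$, substituting $\Sigma_{n+1} = \cL_{n+1} - 2\pi_n^*\cL_n$ gives
\[\cM - \Sigma_{n+1} \;=\; \sum_{i=1}^{n-1} b_i\cL_i + (b_n + 2)\cL_n + (b_{n+1} - 1)\cL_{n+1},\]
whose coefficients remain nonnegative, so $H^1(W, \cM - \Sigma_{n+1}) = 0$ by induction. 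Meanwhile $\cM|_{\Sigma_{n+1}} = \sum_{i=1}^n b_i\cL_i$ on $Y_n$ by the key identity, whose $H^1$ vanishes by Proposition~\ref{yn}(c). The restriction sequence then yields $H^1(W, \cM) = 0$.

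For $(c)$, since a sum of globally generated line bundles is globally generated, it suffices to show each $\cL_i$ (or its pullback) is free on $W$. For $i \le n$ this is Proposition~\ref{yn}(d) pulled back. For $\cL_{n+1}$, the section $s$ from $(a)$ is nowhere zero on $\Sigma_{n+1}$, giving freeness there. Off $\Sigma_{n+1}$, sections $\Gamma$ of $\pi_n$ in $|\cL_{n+1}|$ disjoint from $\Sigma_{n+1}$ form a family parameterized by $H^0(Y_n, 2\cL_n)$, which by Proposition~\ref{yn}(d) sweeps out $W \setminus \Sigma_{n+1}$. Under $\Gamma \cong Y_n$ one has $\cL_{n+1}|_\Gamma = 2\cL_n$, free by Proposition~\ref{yn}(d); the sequence $0 \to \cO_W \to \cL_{n+1} \to \cL_{n+1}|_\Gamma \to 0$ together with $H^1(W, \cO_W) = 0$ (by rationality of $W$) surjects on global sections, lifting freeness of $\cL_{n+1}|_\Gamma$ to freeness of $\cL_{n+1}$ along $\Gamma$. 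The principal subtlety, as in Proposition~\ref{yn}(d), is the scheme-theoretic equality $\Gamma \cap \Sigma_{n+1} = \emptyset$, which produces the clean identification $\cL_{n+1}|_\Gamma = 2\cL_n$ and relies on $|\cO_{\Sigma_{n+1}}|$ containing only the empty divisor.
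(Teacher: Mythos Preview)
Your proof is correct and follows essentially the same approach as the paper's own argument, which is deliberately brief (``we leave the details to the readers'') and refers back to the proof of Proposition~\ref{yn}. You have supplied exactly those details: the restriction sequence to $\Sigma_{n+1}$ using $\cL_{n+1}|_{\Sigma_{n+1}}=\cO$ for $(a)$, the induction on $b_{n+1}$ via $\cM-\Sigma_{n+1}=\sum_{i\le n-1}b_i\cL_i+(b_n+2)\cL_n+(b_{n+1}-1)\cL_{n+1}$ for $(b)$, and the ``move $\Gamma$'' argument for $(c)$, all matching the paper's template.
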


\begin{proof}
The statement $(a)$ follows from the standard sequence by restricting to $\Sigma_{n+1}$ as in the proof of Proposition \ref{yn}. The proof of $(b)$ and $(c)$ are almost the same as those in Proposition \ref{yn}. For example, let $\cM_{n+1} = \sum_{i=1}^{n+1} b_i \cL_i$. Since $\cL_{n+1}-\Sigma_{n+1}=2\cL_n$, one gets $$\cM'_{n+1}=\cM_{n+1}-\Sigma_{n+1}=\sum_{i=1}^{n-1} b_i \cL_i+ (b_n+2)\cL_n + (b_{n+1}-1) \cL_{n+1}.$$  The restriction sequence to $\Sigma_{n+1}$ reads
$$ 0 \to \cM'_{n+1} \stackrel{+\Sigma_{n+1}}{\lra} \cM_{n+1} \to \cM_{n} \to 0,$$ where $\cM_n=\sum_{i=1}^n b_i \cL_i$ on $\Sigma_{n+1} \cong Y_n$. One can then proceed by induction similarly.

We leave the details to the readers.
\end{proof}

To construct our Example A, we need to consider the following line bundles:
 $$ \begin{array}{l}
  \cT_{n+1}=((2n+6)e, 2n+6, 2n+4, \ldots, 12, 10, 5)\\%=5 \cL_{n+1}+2\sum_{i=2}^{n-1} \pi_{n+1,i}^* \cL_i\\
  \mathcal{B}_{n+1} =((n+3)e, n+3, n+2, \ldots, 6, 5, 3)\equiv\frac{1}{2}(\cT_{n+1}+\Sigma_{n+1}) \\
  \cN_{n+1}=(2e-2, 1, 1, \ldots, 1, 1)=K_W+\mathcal{B}_{n+1} \\
  %  \cH_{n+1}=(2e-2, 1, 1, \ldots, 1, 1/2)
  \end{array}
  $$
and the $\bQ$-line bundle
 $$ \begin{array}{l}
 \cH_{n+1}=(2e-2, 1, 1, \ldots, 1, 1/2)
 \end{array}.
 $$
 The choices of these line bundles are inspired by the work of Kobayashi.

% We will take a suitable double cover of $W$, where the ramification divisor is given by a section
% in $|2\cB_{n+1}|$. The singularities of this double cover is described in the following lemma.

\begin{lem}\label{br} The linear system $|\cT_{n+1}|$ is free and we can choose a general member $T_{n+1} \in |\cT_{n+1}|$ such that $T_{n+1}+ \Sigma_{n+1} \in |2\cB_{n+1}|$ is a simple normal crossing divisor. 
%That is, both $T_{n+1}$ and $\Sigma_{n+1}$ are smooth and for any point $P\in T_{n+1} \cap \Sigma_{n+1}$, there exist local coordinates $x_1,x_2, \ldots, x_{n+1}$ such that $\Sigma_{n+1}=(x_1=0)$ and $T_{n+1}=(x_2=0)$.
\end{lem}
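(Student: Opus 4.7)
The plan is to reduce both the freeness of $|\cT_{n+1}|$ and the SNC condition to the results already proven in Propositions \ref{yn} and \ref{wn+1}, via Bertini-type arguments on $W$ and on $\Sigma_{n+1}\cong Y_n$.

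\medskip

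\emph{Step 1 (freeness).} First I would express $\cT_{n+1}$ as a non-negative $\bZ$-linear combination $\sum_{i=1}^{n+1}b_i\cL_i$. Solving the triangular system by reading off the coefficient of $\Sigma_{n+1}$, then $\Sigma_n$, then $\Sigma_{n-1},\ldots,\Sigma_1$ one finds
\[
b_{n+1}=5,\quad b_n=0,\quad b_2=b_3=\cdots=b_{n-1}=2,\quad b_1=0,
\]
all non-negative. Hence Proposition \ref{wn+1}(c) immediately gives that $|\cT_{n+1}|$ is base-point free, and Bertini's theorem on $W$ guarantees that a general member $T_{n+1}\in|\cT_{n+1}|$ is smooth and irreducible.

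\medskip

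\emph{Step 2 (identifying $T_{n+1}+\Sigma_{n+1}$ with a member of $|2\cB_{n+1}|$).} This is just a numerical check: using the coordinates $(b_1,\ldots,b_{n+1})$ one has $2\cB_{n+1}=((2n+6)e,2n+6,2n+4,\ldots,12,10,6)$ and $\cT_{n+1}+\Sigma_{n+1}$ agrees with this in every coordinate (only the last entry $5$ becomes $5+1=6$), so indeed $T_{n+1}+\Sigma_{n+1}\in|2\cB_{n+1}|$.

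\medskip

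\emph{Step 3 (SNC along $\Sigma_{n+1}$).} Since $\Sigma_{n+1}$ is a smooth section and $T_{n+1}$ is already smooth, it remains to check that $T_{n+1}$ meets $\Sigma_{n+1}$ transversally and that $\Sigma_{n+1}\not\subset T_{n+1}$. Using the relation $\cL_{n+1}=2\pi_n^*\cL_n+\Sigma_{n+1}$ together with $\cN_{\Sigma_{n+1}/W}\cong\cL_n^{-2}$, one gets $\cL_{n+1}|_{\Sigma_{n+1}}=0$ under the identification $\Sigma_{n+1}\cong Y_n$. Substituting the $b_i$ from Step 1 therefore yields
\[
\cT_{n+1}|_{\Sigma_{n+1}} = 2\cL_2+2\cL_3+\cdots+2\cL_{n-1},
\]
a non-negative combination of the $\cL_i$'s on $Y_n$. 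By Proposition \ref{yn}(d) this linear system on $Y_n$ is free.

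\medskip

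\emph{Step 4 (applying Bertini on $\Sigma_{n+1}$).} From the restriction sequence
\[
0\to\cT_{n+1}-\Sigma_{n+1}\to\cT_{n+1}\to\cT_{n+1}|_{\Sigma_{n+1}}\to0
\]
and $\cT_{n+1}-\Sigma_{n+1}=2\cL_2+\cdots+2\cL_{n-1}+2\cL_n+4\cL_{n+1}$ (again a non-negative combination), Proposition \ref{wn+1}(b) gives $H^1(W,\cT_{n+1}-\Sigma_{n+1})=0$, so the restriction map $H^0(W,\cT_{n+1})\to H^0(\Sigma_{n+1},\cT_{n+1}|_{\Sigma_{n+1}})$ is surjective. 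Hence for a general $T_{n+1}\in|\cT_{n+1}|$ the scheme-theoretic intersection $T_{n+1}\cap\Sigma_{n+1}$ is a general member of the free linear system $|\cT_{n+1}|_{\Sigma_{n+1}}|$ on $\Sigma_{n+1}\cong Y_n$, and is therefore smooth of codimension one in $\Sigma_{n+1}$ by Bertini. In particular $\Sigma_{n+1}\not\subset T_{n+1}$ and $T_{n+1}$ meets $\Sigma_{n+1}$ transversally, so $T_{n+1}+\Sigma_{n+1}$ has simple normal crossings.

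\medskip

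The only non-mechanical step is Step 3: getting the right restriction formula requires using the specific geometry $\cL_{n+1}|_{\Sigma_{n+1}}=0$ (i.e.\ the fact that $\cL_{n+1}$ was set up so that $\Sigma_{n+1}$ is its exceptional locus), after which all remaining claims follow formally from the already established freeness and vanishing results.
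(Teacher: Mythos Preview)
Your proof is correct and follows essentially the same route as the paper: decompose $\cT_{n+1}$ as a non-negative combination of the $\cL_i$ to get freeness from Proposition~\ref{wn+1}(c), then use the $H^1$-vanishing from Proposition~\ref{wn+1}(b) to make the restriction map to $\Sigma_{n+1}$ surjective and conclude by Bertini. Your decomposition $\cT_{n+1}=5\cL_{n+1}+2\sum_{i=2}^{n-1}\cL_i$ (with $b_1=b_n=0$) is in fact the accurate one; the paper writes the sum starting at $i=1$, which appears to be a minor typo, and similarly its expression for $\cT_{n+1}-\Sigma_{n+1}$ should have coefficient $2$ on the sum, matching your $2\cL_2+\cdots+2\cL_{n-1}+2\cL_n+4\cL_{n+1}$.
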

\begin{proof}
Since $\cT_{n+1}=5 \cL_{n+1}+2\sum_{i=1}^{n-1} \cL_i$, it is free by Proposition \ref{wn+1}. Moreover, $\cT_{n+1}|_{\Sigma_{n+1}}=2\sum_{i=1}^{n-1} \cL_i$ is free and the restriction map is surjective thanks to the vanishing of $$H^1(W, \cT_{n+1}-\Sigma_{n+1})=H^1(W, 4\cL_{n+1}+2\cL_n+\sum_{i=1}^{n-1} \cL_i)$$
as proved in Proposition \ref{wn+1}. The claim follows by Bertini's Theorem.
\end{proof}

 % Let $\cN_n=(2e-2, 1, 1, \ldots, 1)$ on $Y_n$.
%Note that $\cN_n=\cL_n+(e-2)\Sigma_1$.

\begin{ex}[=Example A] We assume that $e \ge 2$. 
Consider the double cover $\tau: X\to W$ branching along a general simple normal crossing divisor $T_{n+1}+\Sigma_{n+1} \in |2\cB_{n+1}|$ (cf. Lemma \ref{br}). It is clear that $X$ has singularities of type $A_1 \times \bA^{n-1}$, which is Gorenstein and canonical. Note that
$$ \begin{array}{l}
K_{X}=\tau^*(K_W+\cB_{n+1})=\tau^*(2e-2,1\ldots,1)=\tau^*\cN_{n+1},
\end{array}
$$
is nef as $\cN_{n+1}=(e-2)\Sigma_1+\cL_{n+1}$ is nef  by Proposition 4.2. Blow up the singular locus of $X$, which is crepant, we obtain a smooth minimal variety $\tilde{X}$ with
$h^0(\tilde{X}, mK_{\tilde{X}})=h^0(X, mK_X)$ for all $m$.
\end{ex}

%\footnote{We need to take a resolution $\mu: \tilde{X}_{n+1} \to X_{n+1}$. This is crepant. So there is no change in computation at all.}
% $$K_{X_{n+1}} =\tau^*(K_{W_{n+1}} + \cB_{n+1})=\tau^*\cN_{n+1}.$$

%To finish our example, we will compute ${\rm Vol}(X)$ and $p_g(X)$ in the next proposition.

Before moving on, we fix some notations.
Let $\Sigma_0=\tau^{-1}(\Sigma_{n+1})$ and $T_0=\tau^{-1}(T_{n+1})$ be the ramification divisors on $X$. Then
$\tau^*(\Sigma_{n+1})=2\Sigma_0,\ \tau^*T_{n+1}=2T_0,$ and
$$\Sigma_0|_{\Sigma_0}=\frac{1}{2}\tau^*\Sigma_{n+1}|_{\Sigma_0}=-\cL_{n}.$$

\begin{prop}\label{A}
Keep the above notations. Then the following holds for the variety $X_A:=\tilde{X}$ as constructed in Example 4.4.
\begin{enumerate}[$(a)$]
	\item The geometric genus $$p_g(X_A)=h^0(X_A, K_{X_A}) = h^0(W, \cN_{n+1}) = (n+1)e-n.$$
	\item $H:=K_{X_A}-\Sigma_0=\tau^*\pi_{n}^*\cN_n+\Sigma_0\equiv\tau^*\cH_{n+1}$ is  big and semiample.
      %\item $|K_X|=|H|+\Sigma_0$, where $|H|$ is base point free and $\Sigma_0$ is the fixed part.
	\item The volume $\vol(X_A) = 2\cH_{n+1}^{n+1}=(n+2)e-2(n+1)$.
	\item %$|\cN_n|$ is base point free on $Y_n$ and defines a birational morphism. In particular,
		One can identify the canonical image of $X_A$ with $\varphi_{\cN_n}(Y_n)$, which is $n$-dimensional.
		% $K_{X}=\tau^*\cN_{n+1}$ defines a map to $Y_{n}$.
\end{enumerate}
\end{prop}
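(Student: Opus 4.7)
For (a), apply the double-cover formula $\tau_*\omega_X = \omega_W \oplus \cN_{n+1}$; since the resolution $X_A \to X$ is crepant and $h^0(\omega_W) = 0$ ($W$ is rational), $p_g(X_A) = h^0(W, \cN_{n+1})$. Pushing forward along $\pi_n: W \to Y_n$ via $\cN_{n+1} = \pi_n^*\cN_n + \Sigma_{n+1}$ and $(\pi_n)_*\cO_W(\Sigma_{n+1}) = \cO_{Y_n} \oplus \cL_n^{-2}$, this splits as $h^0(Y_n,\cN_n) + h^0(Y_n,\cN_n - 2\cL_n)$; the second summand vanishes because $\cN_n - 2\cL_n$ has coefficient $-1$ along $\Sigma_n$ and $R(\pi_{n,n-1})_*\cO(-\Sigma_n) = 0$ on the $\bP^1$-bundle $Y_n \to Y_{n-1}$. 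Then I induct on $n$ with the restriction sequence $0 \to \pi_{n,n-1}^*\cN_{n-1} \to \cN_n \to (\cN_n)|_{\Sigma_n} \to 0$: a coordinate computation gives $\cN_n|_{\Sigma_n} = (e-2)\Sigma_1$ on $Y_{n-1}$ with $h^0 = e-1$; both outer $H^1$'s vanish (kernel by Leray plus the inductive hypothesis, restriction as pullback from $\bP^1$); and the base case $h^0(\bP^1,\cO(2e-2)) = 2e-1$ yields the closed form $(n+1)e - n$.

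For (b), the displayed identity is immediate from $K_{X_A} = \tau^*\cN_{n+1}$, $\cN_{n+1} = \pi_n^*\cN_n + \Sigma_{n+1}$, and $\tau^*\Sigma_{n+1} = 2\Sigma_0$; then $\Sigma_0 \equiv \tfrac{1}{2}\tau^*\Sigma_{n+1}$ gives $H \equiv \tau^*\cH_{n+1}$. For semi-ampleness, write $2\cH_{n+1} = 2(e-2)\Sigma_1 + \cL_{n+1}$: the pencil $|\Sigma_1|$ is pulled back from $\bP^1$ via $\pi_1$ and $\cL_{n+1}$ is base-point-free by Proposition \ref{wn+1}(c), so $2\cH_{n+1}$ is base-point-free and $H$ is semi-ample. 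Bigness will follow from the positive self-intersection in (c).

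For (c), using $\Sigma_1^2 = 0$ collapses the expansion of $(2\cH_{n+1})^{n+1}$ to $\cL_{n+1}^{n+1} + 2(n+1)(e-2)\,\Sigma_1\cdot\cL_{n+1}^n$. Iterating $\cL_{n+1} = 2\pi_n^*\cL_n + \Sigma_{n+1}$ with $\Sigma_{n+1}|_{\Sigma_{n+1}} = -2\cL_n$ gives a telescoping binomial sum that, together with $\cL_n^n = e$ and $\Sigma_1\cdot\cL_n^{n-1}=1$ from Proposition \ref{yn}(a,h), yields $\cL_{n+1}^{n+1} = 2^n e$ and $\Sigma_1\cdot\cL_{n+1}^n = 2^{n-1}$; hence $(2\cH_{n+1})^{n+1} = 2^n[(n+2)e - 2(n+1)]$, positive for $e\ge 2$ (confirming bigness). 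For the volume, $\vol(X_A) = 2\vol(\cN_{n+1})$ by the double cover, and $\vol(\cN_{n+1}) = \vol(\cH_{n+1}) = \cH_{n+1}^{n+1}$ because $\cN_{n+1} - \cH_{n+1} = \tfrac{1}{2}\Sigma_{n+1}$ is supported in the augmented base locus of $\cH_{n+1}$: indeed $\cH_{n+1}|_{\Sigma_{n+1}} = (e-2)\Sigma_1$ has Iitaka dimension at most $1$, so $\Sigma_{n+1}$ is contracted by the semi-ample morphism $\phi_{|m\cH_{n+1}|}$. This gives $\vol(X_A) = 2\cH_{n+1}^{n+1} = (n+2)e - 2(n+1)$.

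For (d), the decomposition in (a) shows that every section of $K_{X_A}$ pulls back from $\cN_n$ on $Y_n$ via $\pi_n\circ\tau$, so $\varphi_{|K_{X_A}|}$ factors as $\varphi_{\cN_n}\circ(\pi_n\circ\tau)$ with image $\varphi_{\cN_n}(Y_n)$. Since $\cN_n = (e-2)\Sigma_1 + \cL_n$ is big ($\cL_n$ is big by Proposition \ref{yn}(e)), this image has dimension $n$, confirming $d_1 = n$. The main obstacle throughout the proof is precisely the volume identification in (c): because $K_{X_A}$ is not nef (its restriction to $\Sigma_0$ has negative $\Sigma_1$-degree, since $\cN_{n+1}|_{\Sigma_{n+1}} = (-2,-1,\ldots,-1)$), one cannot simply read off $\vol(K_{X_A})$ as $K_{X_A}^{n+1}$; the Zariski-type splitting $K_{X_A} = H + \Sigma_0$ with semi-ample $H$ and $\Sigma_0$ in the augmented base locus must be exploited, which is the delicate step.
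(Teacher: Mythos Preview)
Your arguments for (a), (b), and (d) are correct and essentially identical to the paper's. For the intersection computation in (c), you expand $\cL_{n+1}^{n+1}$ directly via the normal-bundle relation $\Sigma_{n+1}|_{\Sigma_{n+1}}=-2\cL_n$, whereas the paper restricts to a general $\Gamma'\in|\cL_{n+1}|$ disjoint from $\Sigma_{n+1}$ and uses $\cL_{n+1}|_{\Gamma'}\cong 2\cL_n$; both are valid and give $\cL_{n+1}^{n+1}=2^ne$, $\Sigma_1\cdot\cL_{n+1}^n=2^{n-1}$. Your reduction $\vol(X_A)=2\vol(\cN_{n+1})$ via the degree-$2$ pullback is also cleaner than the paper's separate sandwich estimate for the summand $h^0(m\cN_{n+1}-\cB_{n+1})$.

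There is, however, a genuine gap in your identification $\vol(\cN_{n+1})=\vol(\cH_{n+1})$. You establish $\Sigma_{n+1}\subset B_+(\cH_{n+1})$ (correct, since $\cH_{n+1}|_{\Sigma_{n+1}}=(e-2)\Sigma_1$ is not big) and then assert that this forces $\vol(\cH_{n+1}+\tfrac12\Sigma_{n+1})=\vol(\cH_{n+1})$. But ``$P$ nef and big, $\operatorname{Supp}(N)\subset B_+(P)\Rightarrow\vol(P+N)=\vol(P)$'' is not a theorem you cite or prove; membership in $B_+(\cH_{n+1})$ only controls the derivative of the volume at $t=0$, not on the whole segment $t\in(0,\tfrac12]$. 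What is actually needed---and what the paper carries out in its Claim~1---is that for every $j\ge 1$ the restriction
\[
(m\cH_{n+1}+j\Sigma_{n+1})|_{\Sigma_{n+1}}=m(e-2)\Sigma_1-2j\cL_n
\]
has $h^0=0$ on $Y_n$ (its $\Sigma_n$-coefficient is $-2j<0$), so the successive restriction sequences yield $h^0(W,m\cN_{n+1})=h^0(W,m\cH_{n+1})$ for even $m$. This is easy, but it is the step your augmented-base-locus formulation skips over: you must verify that $(\cH_{n+1}+t\Sigma_{n+1})|_{\Sigma_{n+1}}$ remains non-big for \emph{all} $t\in(0,\tfrac12]$, not just $t=0$. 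Once that is checked (or the section equality is proved directly), the rest of your argument goes through.

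As a side remark, your observation that $K_{X_A}$ is not nef is correct---$\cN_{n+1}|_{\Sigma_{n+1}}=(-2,-1,\dots,-1)$ pairs negatively with fibers of $Y_n\to Y_{n-1}$---and in fact contradicts the paper's claim in Example~4.4 that $\tilde X$ is minimal (the identity ``$\cN_{n+1}=(e-2)\Sigma_1+\cL_{n+1}$'' stated there is erroneous). This does not affect the proof of the proposition, and you handle the non-nefness correctly by passing to $\cH_{n+1}$.
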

\begin{proof} Since the resolution $\tilde{X}\rightarrow X$ is crepant, it is enough to work on $X$.
Let $\cN_i=(2e-2,1, \ldots, 1)$ on $Y_i$ for $i=1,..., n$, then one has $\cN_i=(e-2)\Sigma_1+\cL_i$. In particular, $\cN_i$ is nef. By projection formula, one obtains
\begin{align*}
h^0(Y_i, \cN_i)&= h^0(Y_{i-1}, \cN_{i-1})+ h^0(Y_{i-1}, \pi_{i,1}^*(e-2)\Sigma_1) \\
&= h^0(Y_{i-1}, \cN_{i-1})+e-1.
\end{align*}
Since $h^0(\cN_1)=2e-1$, we have $h^0(\cN_n) = (n+1)e -n$ by induction. Since $p_g(W)=0$ as $W$ is rational, $\tau_*\cO_X=\cO_W\oplus\cO_W(-\cB_{n+1})$, and $(\pi_{n+1,n})_*\cO_W(1)=\cO_{Y_n}\oplus\cL_n^{-2}$, it follows again from projection formula that
\begin{align*}
h^0(K_X)=h^0(W, \cN_{n+1})=h^0(Y_n, \cN_n).
\end{align*}
This proves $(a)$.

We now prove $(b)$. Since $2 \cH_{n+1}=  \cL_{n+1}+(2e-4) \Sigma_1$, it follows that $\cH_{n+1}$ is semiample by Proposition \ref{wn+1} $(c).$

%{\bf $H$ is big}:
It remains to show $\cH_{n+1}^{n+1}>0$. Since $|\cL_{n+1}|$ is base point free, a general member $\Gamma'\in|\cL_{n+1}|$ is smooth and $\Gamma'\cong Y_n$ as $\Gamma'.l=1$ for $l$ a fiber of $W\rightarrow Y_n$. Since $\cL_{n+1}|_{\Sigma_{n+1}}\cong\cO_{Y_n}$, it follows that  $\Gamma'$ is disjoint from $\Sigma_{n+1}$. Therefore  $$\cL_{n+1}|_{\Gamma'}=(2\cL_n+\Sigma_{n+1})|_{\Gamma'}\cong2\cL_n. \eqno{(4.2)}$$
Now we can easily compute
\begin{align*} (2\cH_{n+1})^{n+1}=&((2e-4)\Sigma_1+\cL_{n+1})^{n+1}\\
						  =&\cL_{n+1}^{n+1}+(n+1)(2e-4)\cL_{n+1}^n\cdot_{W}\Sigma_1\\
%					  =&(\cL_{n+1}|_{\Gamma'})^n+(n+1)(2e-4)(\cL_{n+1}|_{\Gamma'})^{n-1}\cdot_{W}\Sigma_1\\
						  =&(2\cL_n)^n+(n+1)(2e-4)(2\cL_n)^{n-1}\cdot_{Y_n}\Sigma_1 \\
						  =&2^ne+(n+1)(2e-4)2^{n-1},
 %                            =& 2^n ( (n+2)e-2(n+1)),
\end{align*}
where we have used $(a)$ and $(h)$ in Proposition \ref{yn}. Hence
$$2(\cH_{n+1}^{n+1})=(n+2)e-2(n+1)>0.$$
%Since $3H-K_X=2H+\tau^*\pi_{n+1,n}^*\cN_n$ is nef and big and $H$ is nef, $H$ is semiample by base point freeness theorem as $X$ has at most canonical singularities.
This proves assertion $(b)$.

We now prove $(c)$. By definition, if there is a constant $c$ such that
$$h^0(X, mK_X)= c\cdot\frac{m^{n+1}}{(n+1)!}+\text{l.o.t.}\ {\rm for\ all}\ m\gg0,$$
then $\vol(X)=c$. We will show that $c=2 \cH_{n+1}^{n+1}$.
By projection formula, we have $$h^0(X, mK_X)=h^0(W, m\cN_{{n+1}})+h^0(W, m\cN_{n+1}-\cB_{n+1}).$$
The proof can be completed by the following three steps.

\noindent
{\bf Claim 1.} If $m$ is even, then
$$h^0(W, m\cN_{{n+1}})=h^0(W, m\cH_{{n+1}})=(\cH_{n+1}^{n+1})\cdot\frac{m^{n+1}}{(n+1)!}+\text{l.o.t.}$$

For any $j \ge 1$, we consider the exact sequence
$$ 0 \to m \cH_{n+1} +(j-1)\Sigma_{n+1} \to m\cH_{n+1}+ j \Sigma_{n+1} \to (m\cH_{n+1}+ j \Sigma_{n+1})|_{\Sigma_{n+1}} \to 0.$$

Since $2 \cH_{n+1} = \cL_{n+1}+(2e-4)\Sigma_1$, $\cL_{n+1}|_{\Sigma_{n+1}}=\cO_{Y_n}$, and $\Sigma_{n+1}|_{\Sigma_{n+1}}=-2\cL_n$, one sees that
\begin{align*}(m\cH_{n+1}+ j \Sigma_{n+1})|_{\Sigma_{n+1}} =&\ \frac{m}{2}(2e-4) \Sigma_1 -2j \cL_n
%			 =&\ -2j \Sigma_n + \pi^*(-2j \cL_{n-1}+m(e-2) \Sigma_1),
\end{align*}
which can not be effective. Therefore,
$$H^0(W, m \cH_{n+1}) \stackrel{+\frac{m}{2} \Sigma_{n+1}}{\lra} H^0(W, m \cH_{n+1}+\frac{m}{2} \Sigma_{n+1})=H^0(W, m \cN_{n+1})$$ is an isomorphism. By the asymptotic Riemann-Roch formula for $\cH_{n+1}$, the Claim follows.
 %and the computation of $(b)$,  this proves Claim 1.
%\vskip 1cm

\noindent
{\bf Claim 2.} $\vol(\cN_{n+1})=\vol(\cH_{n+1})=\cH_{n+1}^{n+1}$.

From the definition of volume, we get
\begin{align*} \vol(2\cN_{n+1})=\limsup_{m\rightarrow\infty}\frac{h^0(2m\cN_{n+1})}{m^d/d!}=\limsup_{m\rightarrow\infty}\frac{h^0(2m\cH_{n+1})}{m^d/d!}=\vol(2\cH_{n+1}),
\end{align*}
where $d=\dim X=n+1$. Since $\cN_{n+1}$ is big, $\cH_{n+1}$ is free and big, and volume is homogeneous for big divisors by \cite[Proposition 2.2.35]{P1}, we get $\vol(\cN_{n+1})=\vol(\cH_{n+1})=\cH_{n+1}^{n+1}$.

%\vskip 0.2cm
\noindent
{\bf Claim 3.} End of proof of $(c)$.

Since it is easy to find an integer $d$ such that $d \cN_{n+1}-\cB_{n+1}$ is effective, we obtain that
$$ h^0(W, (m-d) \cN_{n+1}) \le h^0(W, m\cN_{n+1}-\cB) \le h^0(W, m \cN_{n+1}),$$
and hence for $m\gg0$,
$$h^0(W, m\cN_{n+1}-\cB_{n+1})=(\cH_{n+1}^{n+1})\cdot\frac{m^{n+1}}{(n+1)!}+\text{l.o.t.}$$
Combined with Claim 1 and 2, we finish the proof of the statement $(c)$.

For $(d)$, it is easy to see that by the projection formula, both the maps
$$H^0(Y_{n}, \cN_{n}) \stackrel{\pi_{n}^*}{\lra} H^0(W, \cN_{n+1}) \stackrel{\tau^*}{\lra} H^0(X, K_X)$$
are isomorphisms. Therefore, $|K_X|\cong\tau^* \pi_{n}^* | \cN_n|$. It follows that the map defines by $|K_X|$ is the composition of the map induced by $ |\cN_n|$ and $\pi_{n+1,n} \circ \tau \colon X \to Y_n$.
Since $|\cL_n| \hookrightarrow |\cN_n|$ as long as $e \ge 2$, where $|\cL_n|$ is free and induces a birational morphism by Proposition \ref{yn}.$(g)$, this shows that the image of $\varphi_{|K_X|}$ is $n$-dimensional and hence completes the proof.
\end{proof}

\begin{rem} One can actually prove that $|H|$ is base point free and contracts $\Sigma_0$ to $\bP^1$, which can be identified with $\pi:Y_n\rightarrow Y_1=\bP^1$ as  $H|_{\Sigma_0}=\cN_n-\cL_n=\pi^*(e-2)\Sigma_1$.
\end{rem}

%The above proposition shows that $X_A$ is the required example in Theorem A. To construct examples as in Theorem B, we take further tower of $\bP^1$-bundles over $W$ followed by a suitable double cover. This is done in the next section.

\section{Construction of Example B}
The purpose of this section is to establish Example B. We start by considering a higher tower of $\bP^1$-bundles.

\begin{ex}
Fix $k\geq1$. For any $ 1\le l\le k$, we construct $\bP^1$-bundles inductively as follows: Start with $W_0=W$ and $\cL_{n+1}$ as in (4.1). Take $W_{l}:=\bP_{W_{l-1}}(\cO \oplus \cL_{n+l}^{-1})$ and consider the line bundle $\cL_{n+1+l}:=\Sigma_{n+1+l} + \pi^*\cL_{n+l}$ on $W_l$, where $\Sigma_{n+1+l}\sim\cO_{W_l}(1)$ is the distinguished section with $\cN_{\Sigma_{n+1+l}/W_l}=\Sigma_{n+1+l}|_{\Sigma_{n+1+l}}\cong\cL_{n+l}^{-1}$. By abusing the notation, we still write $\pi:W_l\rightarrow W_{l-1}$. We end up with a $(n+1+k)$-dimensional tower of $\bP^1$-bundles by considering the following building data of $(n+k) \times (n+k)$ matrix,
$$\left(
\begin{array}{ccccccccc}
e & 0 & 0 &  & & \ldots &  & & 0\\
e & 1 & 0 & & &\ldots   &  & & 0 \\
& & & &  \vdots  & & & & \\
e & 1 & \ldots & 1 &0 &   &  &\ldots & 0\\
2e& 2 & \ldots & 2 &2 & 0&   & \ldots& 0 \\
2e& 2 & \ldots & 2 &2 & 1 & 0 & \ldots & 0 \\
& & & & \vdots  & & & &\\
2e& 2 & \ldots & 2 &2 & 1 & 1 &\ldots & 1 \\
\end{array}
\right).$$
This matrix is the expansion of the $n\times n$ matrix in Section 4 by the last $k$ rows.
%and $\cM_i=\cO_{Y_i}(2e-2, 1, \ldots, 1)$.
It is clear that we have $\cL_{n+1+l} \cdot \Sigma_{n+1+l}\equiv0$ as $(n+l-1)$-cycle on $W_{l}$ for $l \ge 1$.

Note that the $(n+1+k)$-dimensional tower of $\bP^1$-bundle $W_k$ has canonical divisor of the following form  $$-K_{W_{k}}=\displaystyle{((2k+n+1)e+2,\underbrace{2k+n+2,\dots,2k+4}_{n-1} , \underbrace{k+2, \dots,3,2}_{k+1})}.$$
\end{ex}

We first establish some  properties generalizing those in Section 4.
\begin{lem} In the setup of the above notations, we have for all $l\geq0$ the following properties:
\begin{enumerate}[$(a)$]
\item $\cL_{n+1+l}$ is free and big;
\item $\cL_{n+1+l}^{n+1+l}=2^n e$;
\item $\cL_{n+1+l}^{n+l}\cdot \Sigma_1=2^{n-1}$;
\item $(\cL_{n+1+l})^{n+l} \cdot_{W_{l}} \Sigma_{n+1+l}=0 $. %= (\cL_{n+1})^{n} \cdot_{W} \Sigma_{n+1}
\end{enumerate}
\end{lem}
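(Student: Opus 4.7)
The plan is a straightforward simultaneous induction on $l\geq 0$, proving (a)--(d) together at each level.

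For the base case $l=0$, part (a) follows from Proposition \ref{wn+1}(c) (freeness of $\cL_{n+1}$ since all coefficients are non-negative) combined with positivity from (b). For (b) and (c), I would invoke the general member $\Gamma\in |\cL_{n+1}|$ already exhibited in the proof of Proposition \ref{A}(c): $\Gamma$ is smooth, isomorphic to $Y_n$, disjoint from $\Sigma_{n+1}$, and $\cL_{n+1}|_\Gamma\cong 2\cL_n$. Restricting gives
\begin{equation*}
\cL_{n+1}^{n+1}=(2\cL_n)^n=2^n e,\qquad \cL_{n+1}^n\cdot\Sigma_1=(2\cL_n)^{n-1}\cdot_{Y_n}\Sigma_1=2^{n-1},
\end{equation*}
using Proposition \ref{yn}(a),(h). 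Part (d) is immediate from $\cL_{n+1}|_{\Sigma_{n+1}}\cong \cO_{Y_n}$.

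For the inductive step, I assume the lemma at level $l-1$. The starting observation is that $\cL_{n+l+1}=\Sigma_{n+l+1}+\pi^*\cL_{n+l}$ together with $\Sigma_{n+l+1}|_{\Sigma_{n+l+1}}=-\cL_{n+l}$ yields $\cL_{n+l+1}|_{\Sigma_{n+l+1}}\cong\cO$, giving (d) immediately since $\dim\Sigma_{n+l+1}=n+l$. To handle (b) and (c), I would mimic the vertical/horizontal argument from Proposition \ref{yn}(d): a general $\Gamma\in|\cL_{n+l+1}|$ not containing $\Sigma_{n+l+1}$ decomposes as $\Gamma_h+\Gamma_v$ with respect to $\pi\colon W_l\to W_{l-1}$, and because $\cL_{n+l+1}\cdot f=1$ on a fiber $f$ and $\cL_{n+l+1}|_{\Sigma_{n+l+1}}=0$, $\Gamma$ must be an irreducible horizontal section disjoint from $\Sigma_{n+l+1}$, hence $\Gamma\cong W_{l-1}$ with $\cL_{n+l+1}|_\Gamma\cong\cL_{n+l}$. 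Intersection numbers then transfer one level down,
\begin{equation*}
\cL_{n+l+1}^{n+l+1}=\cL_{n+l}^{n+l}=2^n e,\qquad \cL_{n+l+1}^{n+l}\cdot\Sigma_1=\cL_{n+l}^{n+l-1}\cdot\Sigma_1=2^{n-1},
\end{equation*}
by the inductive hypothesis.

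Part (a) is the main content and the main obstacle. The plan is to establish freeness via the two standard restriction sequences
\begin{equation*}
0\to\cO_{W_l}\xrightarrow{+\Gamma}\cL_{n+l+1}\to\cL_{n+l+1}|_\Gamma\to 0,\qquad 0\to\pi^*\cL_{n+l}\xrightarrow{+\Sigma_{n+l+1}}\cL_{n+l+1}\to\cO_{\Sigma_{n+l+1}}\to 0,
\end{equation*}
combined with $H^1(W_l,\cO_{W_l})=0$ (rationality) and $H^1(W_l,\pi^*\cL_{n+l})=H^1(W_{l-1},\cL_{n+l})=0$ (Leray plus induction). The latter vanishing is not part of the statement but must be carried along in the induction exactly as in Propositions \ref{yn}(c) and \ref{wn+1}(b); this auxiliary bookkeeping is the real hurdle. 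Once it is in place, inductive freeness of $\cL_{n+l}$ on $W_{l-1}$ yields freeness of $\cL_{n+l+1}|_\Gamma$ on $\Gamma$, moving $\Gamma$ gives freeness off $\Sigma_{n+l+1}$, and the second sequence handles freeness on $\Sigma_{n+l+1}$ itself; bigness is immediate from (b) since $\cL_{n+l+1}^{n+l+1}=2^n e>0$.
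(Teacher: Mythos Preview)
Your proposal is correct and follows essentially the same route as the paper. Both arguments carry the auxiliary vanishing $H^1(W_l,\cL_{n+1+l})=0$ along the induction, deduce freeness from the two restriction sequences to $\Gamma$ and to $\Sigma_{n+1+l}$, and read off bigness from (b). The only cosmetic difference is in (b): the paper expands $\cL_{n+1+l}^{n+1+l}$ algebraically using $\cL_{n+1+l}\cdot\Sigma_{n+1+l}\equiv 0$ to peel off one factor at a time, whereas you restrict to the section $\Gamma\cong W_{l-1}$; since the paper uses exactly your $\Gamma$-restriction for (c) anyway, this is not a substantive divergence.
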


\begin{proof} All the statements hold for $l=0$ from Proposition \ref{yn} and the proof of Proposition \ref{wn+1}.
We assume now $l\geq1$.

Use $\cL_{n+1+l}|_{\Sigma_{n+1+l}}=0$ and $H^1(W_{n+1},\cL_{n+1})=0$, one can show inductively that $H^1(W_{l},\cL_{n+1+l})=0$. It follows that $H^1(W_{n+1+l},\cL_{n+1+l}-\Sigma_{n+1+l})=0$ and we now apply the argument as in Proposition \ref{wn+1}. This proves freeness of $\cL_{n+1+l}$ and $(a)$ follows once we prove $(b)$.

Since $\cL_{n+1+l} \cdot \Sigma_{n+1+l}\equiv0$ for $l\geq1$, equality in $(b)$ follows from
\begin{align*}
\cL_{n+1+l}^{n+1+l} =&\ \cL_{n+1+l}^{n+l}\cdot (\Sigma_{n+1+l}+\pi^*\cL_{n+l})
			       =\cL_{n+1+l}^{n+l} \cdot \pi^* \cL_{n+l}\\
				&\ \vdots\\
			       =&\ (\Sigma_{n+1+l}+\pi^*\cL_{n+l}) \cdot \pi^* (\cL_{n+l})^{n+l}
		                = \Sigma_{n+1+l} \cdot \pi^* (\cL_{n+l})^{n+l}\\
			       =&\ \cL_{n+l}^{n+l}=\cdots=\cL_{n+1}^{n+1}=2^ne.
\end{align*}

For $(c)$, the proof is the same as Proposition \ref{yn}.$(h)$: there is a section $\Gamma_l\cong W_l$ disjoint from $\Sigma_{n+1+l}$, then
by restriction to $\Gamma_l,$ $$\cL_{n+1+l}^{n+l}\cdot \Sigma_1=\ldots=\dots\cL_{n+1}^n\cdot_W\Sigma_1=2^{n-1}.$$

The last statement is proved in the same way as $(c)$.
\end{proof}
The next lemma is useful.
\begin{lem} \label{neg}
	Denote by $$Z_j=\begin{cases} Y_j, &\ 1\leq j\leq n\\ W_{j-n-1}, &\ n+1\le j\leq n+k+1\end{cases}.$$
	For any $1\leq  j \le n+k+1 $, $h^0(Z_j, \cO(a_1, a_2, \ldots, a_j))=0$ if $a_i<0$ for some $1\leq i\leq j$.
\end{lem}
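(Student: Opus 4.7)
The plan is induction on $j$, pushing down one level of the tower at a time via the $\bP^1$-bundle projection $\pi\colon Z_j\to Z_{j-1}$. The key input is that every entry of the building matrices in Example \ref{byn} and Example 5.1 is non-negative (using $e\ge 2$).

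The base case $j=1$ is immediate: $Z_1=Y_1=\bP^1$ and $H^0(\bP^1,\cO(a_1))=0$ whenever $a_1<0$.

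For the inductive step, observe that by construction $Z_j=\bP_{Z_{j-1}}(\cO\oplus\cE_j^{-1})$, where $\cE_j$ is a line bundle on $Z_{j-1}$ with coordinates $\cO(d_1,\ldots,d_{j-1})$, and these coordinates are all non-negative (explicitly $\cE_j=\cL_{j-1}$ for $Z_j=Y_j$ or $Z_j=W_{j-n-1}$ with $j\ge n+2$, while $\cE_{n+1}=\cL_n^{\otimes 2}$ for $Z_{n+1}=W$; the values read directly off the displayed matrices). Write $\cM=a_j\Sigma_j+\pi^*\cM'$ with $\cM'=\cO(a_1,\ldots,a_{j-1})$, and split on the sign of $a_j$. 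If $a_j<0$, then $\pi_*\cO(a_j\Sigma_j)=0$ (relative $\cO(a_j)$ on a $\bP^1$-bundle has no pushforward when $a_j<0$), so by the projection formula $\pi_*\cM=0$ and hence $H^0(Z_j,\cM)=H^0(Z_{j-1},\pi_*\cM)=0$. If instead $a_j\ge 0$ while some $a_k<0$ for $k<j$, combine the projection formula with the standard identification
$$\pi_*\cO(a_j\Sigma_j)=\operatorname{Sym}^{a_j}(\cO\oplus\cE_j^{-1})=\bigoplus_{i=0}^{a_j}\cE_j^{-i}$$
to obtain
$$\pi_*\cM=\bigoplus_{i=0}^{a_j}\cO\bigl(a_1-id_1,\,\ldots,\,a_{j-1}-id_{j-1}\bigr) \quad \text{on } Z_{j-1}.$$
Since $d_k\ge 0$ and $i\ge 0$, one has $a_k-id_k\le a_k<0$ for every summand, so the $k$-th coordinate is still negative, and the inductive hypothesis applied on $Z_{j-1}$ kills $H^0$ of each summand. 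Therefore $H^0(Z_j,\cM)=0$.

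The only genuine obstacle is the bookkeeping step of pinning down $\cE_j$ correctly at each level of the tower and verifying that its coordinates are non-negative, but this is transparent from the building matrices already recorded in Example \ref{byn} and Example 5.1.
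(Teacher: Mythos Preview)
Your proof is correct and follows essentially the same approach as the paper's own proof: both push down along the $\bP^1$-bundle projections via the projection formula and use that the building-matrix entries are non-negative, so a negative coordinate persists through every summand of the pushforward. The only cosmetic difference is that the paper disposes of the case $a_j<0$ by intersecting an effective divisor with a fiber rather than invoking $\pi_*\cO(a_j\Sigma_j)=0$, but the inductive structure is the same.
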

\begin{proof} It is easy to see that if $a_j<0$, then $h^0(Z_j,\cO(a_1,\ldots,a_j))=0$. Otherwise, suppose that $D$ is an effective divisor of $\cO(a_1,\ldots,a_j)$. By intersecting with a fibre of $Z_j\rightarrow Z_{j-1}$, one sees a contradiction immediately.

If now $a_j<0$ but $a_{j+1}\geq0$, then
$$h^0(Z_{j+1},\cO(a_1,\ldots,a_{j+1}))=h^0(Z_j,\cO(a_1,\ldots,a_j)\otimes{\rm Sym}^{a_{j+1}}(\cO\oplus G_j^{-1}))$$
where $G_j$ is either $(e,1,\ldots,1)$ or (a multiple of) $(2e,2,\ldots,2,1,\ldots,1)$. This forces the last summand on $Z_j$ to be negative and hence $$h^0(Z_{j+1},\cO(a_1,\ldots,a_{j+1}))=0\
{\rm if}\ a_j<0\ {\rm and}\ a_{j+1}\geq0.$$ This implies $h^0(Z_{j+1},\cO(a_1,\ldots,a_{j+1}))=0$ if $a_j<0$ with arbitrary  $a_{j+1}\in\bZ$. The lemma now follows by induction via projection formula.
\end{proof}

On $W_k$, we take the line bundles:
$$ \begin{array}{ll}
\cL_{n+1+k} &=(2e, \underbrace{2, 2, \ldots,2}_{n-1},\underbrace{1,\ldots,1}_{k+1})\\
\cN_{n+1+k} &=(2e-2, \underbrace{1, \ldots,1}_{n+k})=\pi_k^*\cN_{n+1}+\sum_{l=1}^{k} \Sigma_{n+1+l}\\
\mathcal{B}_{n+1+k} &=((n+2k+3)e, \underbrace{2k+n+3,\ldots, 2k+5}_{n-1},k+3, \underbrace{k+2, \cdots, 4,3}_{k}) \\
% &=-K_{W_k}+\cN_{n+1+k}\\
%\equiv\frac{1}{2}(\cT_{n+1+k}+\Sigma_{n+1+k}) \\
\cT_{n+1+k} &=((2n+4k+6)e, \underbrace{ 2n+4k+6,\ldots, 4k+10}_{n-1}, 2k+5, \underbrace{2k+4, \cdots, 8}_{k-1}, 5)\\
%\cH_{n+1+k}=(\underbrace{2e-2, 1, \ldots, 1}_{n},\underbrace{1/2,\ldots,1/2}_{k+1})
\end{array}$$

%We will construct a double cover $\tau_k:X_{k}\rightarrow W_{k}$ with $\pi_k:W_{k}\rightarrow W$ such that $p_g(X_k)$ and $\vol(X_k)$ are computed by
%$$\cN_{n+1+k}=(\underbrace{2e-2,1,1,\ldots,1}_{n+1},\underbrace{1\cdots,1}_{k})=\pi_k^*\cN_{n+1}+\sum_{l=1}^{k} \Sigma_{n+1+l}.$$

%As in Section 4, we first describe the singularities of the ramification divisor, which is given %by
It is easy to check that
$$2\cB_{n+1+k}=\cT_{n+1+k}+\Sigma_{n+1+k} +\Sigma_{n+1}$$ and
%\begin{lem} We can write
\begin{align*}\cT_{n+1+k}=\begin{cases}
\sum_{i=2}^{n-1} 2\cL_i+2\cL_{n+1} +5 \cL_{n+2}, &\ k=1\\
\sum_{i=2}^{n-1} 2\cL_i+\cL_{n+1} + \sum_{i=n+2}^{n+k-1} 2 \cL_i + 3 \cL_{n+k}+5 \cL_{n+1+k}, & \ k\geq2
\end{cases}
%=&\sum_{i=2}^{n-1} 2\cL_i+(\delta_{k1}+1)\cL_{n+1} + \sum_{i=n+2}^{n+k-1} 2 \cL_i + 3(1-\delta_{k1}) \cL_{n+k}+5 \cL_{n+1+k}.
\end{align*}

% 5\cL_{n+1+k}+3\cL_{n+k}+2(\cL_{n+k-1}+\ldots+\cL_{n+2})+\cL_{n+1} +2((k-2)\cL_n+\cL_{n-1}+\cdots+\cL_2)+\Sigma_{n+1}.$$
Therefore, $\cT_{n+1+k}$ is free. Pick a general $F \in |\cT_{n+1+k}|$, it is clear that
%In particular, there exists a section $T_{n+1+k}=F+\Sigma_{n+1}\in|\cT_{n+1+k}|$ such that
$D:= F + \pi^*_k\Sigma_{n+1} + \Sigma_{n+1+k}$ is a simple normal crossing divisor on $W_k$.
%\end{lem}
%\begin{proof} The equality is straight forward. Since $\cL_i$'s are base point free, the last statement follows from Bertini Theorem.
%\end{proof}
\begin{ex}[=Example B]
Now take the double cover $\tau_k:X_k\rightarrow W_{k}$ ramified along the divisor $D$. Locally $X_k$ has singularity type $(u^2=xyz)\times\bA^{n+k-2}$ at triple intersection of $D$ and $(w^2=xy)\times\bA^{n+k-1}$ at double locus of $D$, where the first one is of $cD$-type terminal singularities and the second one is of $A_1$-type canonical singularities.
%Let $\tilde{X_B} \to X_B$ be a resolution of $X_B$. Clearly, $h^0(\tilde{X_B}, mK_{\tilde{X_B}})=h^0(X_B, mK_{X_B})$ for all $m$.

Notice that  $K_{X_k}=\tau_k^*(K_{W_k}+B_{n+1+k})=\tau_k^*\cN_{n+1+k}$ is not nef:
Let $C$ be a curve on $W_k$ contained in $\Sigma_{n+1+k}\cong W_{k-1}$. Then
\begin{align*}\cN_{n+1+k}.C=&\ (\cN_{n+k}-\cL_{n+k})\cdot_{W_{k-1}}C\\
					=&\ (-2,\underbrace{-1,\ldots,-1}_{n-1},0,\ldots,0)\cdot_{W_{k-1}}.C\\
					=&\ (-2,-1,\dots,-1)\cdot_{Y_n}C_n,
\end{align*}
where $C_n$ is the pushforward of $C$ on $Y_n$. Hence $\cN_{n+1+k}.C<0$ if $C_n$ is supported on the fibres of $Y_{n}\rightarrow Y_{n-1}$.

Now take $X_B\rightarrow X_k$ to be a resolution of singularities. Notice that $H^0(X_B,mK_{X_B})\cong H^0(X_k,mK_{X_k})$ for all $m\geq0$.
\end{ex}

We will compare $\cN_{n+1+k}$ with
\begin{align*}
\cH_{n+1+k}&\colon =\cN_{n+1+k}-\frac{1}{2} \sum_{j=0}^{k} \Sigma_{n+1+j}.
%=(\underbrace{2e-2, 1, \ldots, 1}_n,\underbrace{1/2,\ldots,1/2}_{k+1}).
\end{align*}
Observe that $2\cH_{n+1+k}=(2e-4)\Sigma_1+\cL_{n+1+k}$ is big and free by Lemma 5.1.

\begin{prop}\label{B}
Keep the above notation, then the following holds for $X_B$.
\begin{enumerate}[$(a)$]
	\item The geometric genus is give by 
		$$p_g(X_B)=h^0(X_B, K_{X_B}) = h^0(Y_n, \cN_n) = (n+1)e-n.$$
	\item The volume is $$\vol(X_B) = 2\cH_{n+1+k}^{n+1+k}=\frac{n+k+2}{2^k}e-\frac{(n+k+1)}{2^{k-1}}.$$
	\item The image of the canonical map of $X_B$ can be identified with $\varphi_{\cN_n}(Y_n)$, which is $n$-dimensional. % $K_{X}=\tau^*\cN_{n+1}$ defines a map to $Y_{n}$.	
\end{enumerate}
\end{prop}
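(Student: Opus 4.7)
The plan is to mimic the proof of Proposition 4.5 throughout, adapted to the iterated tower $W_k$, while invoking Lemma 5.1 and Lemma 5.3 for the book-keeping. Since $X_B \to X_k$ is a resolution of canonical singularities, $h^0(mK_{X_B}) = h^0(mK_{X_k})$, and the double cover formula gives $K_{X_k} = \tau_k^*\cN_{n+1+k}$ together with $\tau_{k,*}\cO_{X_k}(mK_{X_k}) = \cO_{W_k}(m\cN_{n+1+k}) \oplus \cO_{W_k}(m\cN_{n+1+k} - \cB_{n+1+k})$; thus all three assertions will reduce to intersection-theoretic and cohomological computations on $W_k$.

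For part (a), I would apply the projection formula to get $p_g(X_B) = h^0(W_k, \cN_{n+1+k}) + h^0(W_k, \cN_{n+1+k} - \cB_{n+1+k})$, noting that the second summand vanishes by Lemma 5.3 since the first coordinate of $\cN_{n+1+k} - \cB_{n+1+k}$ is $-(n+2k+1)e - 2 < 0$. For the first summand I would push down the tower $W_k \to W_{k-1} \to \cdots \to W$ using $\cN_{n+1+l} = \pi^*\cN_{n+l} + \Sigma_{n+1+l}$: each step contributes $h^0(W_{l-1}, \cN_{n+l})$ plus a twisted term $h^0(W_{l-1}, \cN_{n+l} - \cL_{n+l})$, which vanishes since $\cN_{n+l} - \cL_{n+l} = (-2, -1, \ldots, -1, 0, \ldots, 0)$ has negative coordinates. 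Iterating down to $W$ and invoking Proposition 4.5(a) then gives $p_g(X_B) = h^0(Y_n, \cN_n) = (n+1)e - n$.

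For part (b), the intersection-number calculation would be a direct expansion: using $2\cH_{n+1+k} = (2e-4)\Sigma_1 + \cL_{n+1+k}$ and $\Sigma_1^2 = 0$, one obtains $(2\cH_{n+1+k})^{n+1+k} = \cL_{n+1+k}^{n+1+k} + (n+1+k)(2e-4)\,\cL_{n+1+k}^{n+k}\cdot\Sigma_1 = 2^n e + (n+1+k)(2e-4)\cdot 2^{n-1}$ by Lemma 5.1 (b) and (c), which simplifies to $2^n((n+k+2)e - 2(n+k+1))$; dividing by $2^{n+k}$ yields the stated formula. To identify this with $\vol(X_k)$, I would follow the three-claim template of Proposition 4.5(c). \textbf{Claim 1:} for $m$ even, $h^0(W_k, m\cN_{n+1+k}) = h^0(W_k, m\cH_{n+1+k})$; the idea is to peel off the sections $\Sigma_{n+1+k}, \Sigma_{n+1+k-1}, \ldots, \Sigma_{n+1}$ successively via restriction sequences, with the key computation being $\cH_{n+1+k}|_{\Sigma_{n+1+k}} = (e-2)\Sigma_1$ (from $\cL_{n+1+k}|_{\Sigma_{n+1+k}} \equiv 0$ and the identity for $2\cH$), so that each extra copy of $\Sigma_{n+1+k}$ contributes $-\cL_{n+k}$ to the restriction, producing a negative coordinate and forcing $h^0 = 0$ by Lemma 5.3. \textbf{Claim 2:} $\vol(\cN_{n+1+k}) = \vol(\cH_{n+1+k}) = \cH_{n+1+k}^{n+1+k}$, by the homogeneity of volume for big divisors \cite[Prop.\,2.2.35]{P1}. \textbf{Claim 3:} the $-\cB_{n+1+k}$ correction is absorbed into lower-order terms by sandwiching $h^0(m\cN_{n+1+k} - \cB_{n+1+k})$ between $h^0((m-d)\cN_{n+1+k})$ and $h^0(m\cN_{n+1+k})$, where $d\cN_{n+1+k} - \cB_{n+1+k}$ is effective for some $d$.

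For part (c), the projection formula together with the vanishings from (a) gives isomorphisms $H^0(Y_n, \cN_n) \xrightarrow{\sim} H^0(W_k, \cN_{n+1+k}) \xrightarrow{\sim} H^0(X_k, K_{X_k})$ via pullback along the composite $X_k \to W_k \to \cdots \to W \to Y_n$. Hence $\varphi_{|K_{X_k}|}$ factors as the projection to $Y_n$ followed by $\varphi_{|\cN_n|}$; since $|\cL_n| \hookrightarrow |\cN_n|$ with $\varphi_{\cL_n}$ birational onto the $n$-dimensional variety of minimal degree (Proposition 3.2 (f) and (g)), the image is $n$-dimensional, finishing (c). \textbf{The main obstacle} I anticipate is the iterative restriction in Claim 1 of part (b): one must track how the remaining $\Sigma_{n+1+j'}$'s interact after peeling off $\Sigma_{n+1+j}$. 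The cleanest route is induction on $k$: after stripping $\Sigma_{n+1+k}$ and pushing down to $W_{k-1}$, the problem decomposes into summands of the form $m\cH_{n+k} - i\cL_{n+k}$, to which Lemma 5.3 applies for $i > m/2$ (negative middle coordinates) and the inductive hypothesis applies for $0 \leq i \leq m/2$.
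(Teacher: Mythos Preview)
Your proposal is correct and follows the paper's strategy closely for parts $(a)$, $(c)$, and the intersection-number computation in $(b)$. The one place where your route diverges is the proof of Claim~1. The paper does \emph{not} peel off the $\Sigma_{n+1+j}$'s via restriction sequences, nor does it induct on $k$; instead it expands both $h^0(W_k,m\cN_{n+1+k})$ and $h^0(W_k,m\cH_{n+1+k})$ directly as $(k{+}1)$-fold nested sums of $h^0(Y_n,\,m\cN_n-2\ell_0\cL_n)$ via iterated projection formula, and then observes that every extra term on the $\cN$-side (those with some partial sum $i_k+\cdots+i_j>m/2$) vanishes by Lemma~\ref{neg}, so the two nested sums coincide term by term.

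Your inductive formulation is viable, but note a subtlety in the step you flag as the main obstacle: after one pushdown the $\cN$-summands are $m\cN_{n+k}-i\cL_{n+k}$ while the $\cH$-summands are $m\cH_{n+k}-i\cL_{n+k}$, and these are \emph{not} of the form $m'\cN_{n+k}$ versus $m'\cH_{n+k}$ for a new parameter $m'$. So the naive inductive hypothesis ``$h^0(m'\cN)=h^0(m'\cH)$'' does not apply directly. You would need to strengthen the hypothesis to cover line bundles of the shape $\cO(a,\underbrace{b,\ldots,b}_{n-1},\underbrace{c,\ldots,c}_{l+1})$ with $c\ge b/2$, asserting that $h^0$ is independent of $c$ in that range; this is exactly what the paper's nested-sum argument establishes in one shot. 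Either way the content is the same, and the paper's direct expansion is simply the unrolled version of your induction.
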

\begin{proof} It is enough to work on $X_k$, for which we denote by $X$ in the following computation since there should be no confusion. Similarly, we denote by $\cB$, $\cN$, and $\cH$ the corresponding divisors on $X$.

Statement $(a)$ can be proved via projection formula and Lemma \ref{neg}:
\begin{align*}
H^0(X_k,\cN_{n+1+k})=&H^0(W_k,\cN_{n+1+k})\\
						=&H^0(W_{k-1},\cN_{n+k})\oplus H^0(W_{k-1},\cN_{n+k}-\cL_{n+k})\\
						=&H^0(W_{k-1},\cN_{n+k})=\cdots=H^0(W,\cN_{n+1})\\
						=&H^0(Y_n,\cN_n).	
\end{align*}
Hence part $(c)$ also follows from Proposition \ref{A}.

Similar to the proof of Proposition \ref{A}, we have
$$h^0(X_k,mK_{X_k})=h^0(m\cN_{n+1+k})+h^0(m\cN_{n+1+k}-\cB_{n+1+k})$$
%=c\cdot\frac{m^{n+1+k}}{(n+1+k)!}+{\rm l.o.t.},$$
%and we will show that $c=2{\rm Vol}(\cN)=2\cH_{n+1+k}^{n+1+k}.$
%\vskip 0.5 cm

\noindent
{\bf Claim :} $h^0(X_k,m \cN_{n+1+k})= h^0(X_k,m\cH_{n+1+k})$ for $m$ even.
\begin{proof}[Proof of the Claim]
%The first and last equality is by taking $m'=m$ and $m/2$ respectively in the following formula.
For $l\geq0$ and $m'=m $ or $m/2$ we consider
$$\cN'_{n+1+l}=\cO(\underbrace{(2e-2)m, \ldots, m}_n,\underbrace{m', \ldots, m'}_{l+1}).$$
One has
\begin{align*} &h^0(X_k,\cN'_{n+1+k})\\=\ &h^0(W_k, \cO((2e-2)m, \ldots, m, m', \ldots, m'))\\
=\ &\sum_{i_k=0}^{m'} h^0(W_{k-1}, \cN'_{n+k}-i_k \cL_{n+k}) \\
=\ &\sum_{i_k=0}^{m'} \sum_{i_{k-1}=0}^{m'-i_k} h^0(W_{k-2}, \cN'_{n+k-1}-(i_k+i_{k-1}) \cL_{n+k-1} ) \\
&\ \ \vdots\\
=\ &\sum_{i_k=0}^{m'} \sum_{i_{k-1}=0}^{m'-i_k} \cdots \sum_{i_1=0}^{m'-(i_k+\cdots+i_{2})} h^0(W, \cN'_{n+1}-(\sum_{j=1}^{k}i_j) \cL_{n+1}) \\
=\ &\sum_{i_k=0}^{m'} \sum_{i_{k-1}=0}^{m'-i_k} \cdots \sum_{i_1=0}^{m'-(i_k+\cdots+i_{2})} \sum_{i_0=0}^{m'-(i_k+\cdots+i_1)} h^0(Y_n, m\cN_{n}-2(m'-i_0)\cL_{n}) \\
=\ &\sum_{i_k=0}^{m'} \sum_{i_{k-1}=0}^{m'-i_k} \cdots \sum_{i_1=0}^{m'-(i_k+\cdots+i_{2})} \sum_{\ell_0=i_k+\cdots+i_1}^{m'} h^0(Y_n, m\cN_{n}-2\ell_0\cL_{n})
\end{align*}

 Note that  by Lemma \ref{neg}, the line bundle
\begin{align*} &m\cN_n-2\ell_0\cL_n=\cO(m(2e-2)-2e\ell_0, m-2\ell_0, \ldots,m-2\ell_0)
\end{align*}
has vanishing $h^0$ if any of $i_k >m/2$, $i_k+i_{k-1} > m/2, \ldots, \sum_{j=2}^k i_j >m/2$ holds.
Hence in the following computation, the summation up to $m$ is equal to the summation only up to  $m/2$.
More precisely,
\begin{align*} h^0(X_k,m\cN_{n+1+k})=\ &\sum_{i_k=0}^m \sum_{i_{k-1}=0}^{m-i_k} \cdots \sum_{i_1=0}^{m-(i_k+\cdots+i_{2})} \sum_{\ell_0=i_k+\cdots+i_1}^{m} h^0(Y_n, m\cN_{n}-2\ell_0\cL_{n}) \\
=\ &\sum_{i_k=0}^{m/2} \sum_{i_{k-1}=0}^{m/2-i_k} \cdots \sum_{i_1=0}^{m/2-(i_k+\cdots+i_{2})} \sum_{\ell_0=i_k+\cdots+i_1}^{m/2} h^0(Y_n, m\cN_{n}-2\ell_0\cL_{n}) \\
=\ &h^0(X,m\cH).
\end{align*}
Which verifies the Claim.
\end{proof}

Now by the same reasoning as in Proposition \ref{A}, we get $\vol(X_k)=2\vol(\cN_{n+1+k})=2\vol(\cH_{n+1+k})=2\cH_{n+1+k}^{n+1+k}$.
From $\cH \equiv (e-2)\Sigma_1+\frac{1}{2}\cL_{n+1+k}$ and Lemma 5.1, it follows that
\begin{align*}
2\cH_{n+k+1}^{n+k+1} &= 2\left((\frac{1}{2} \cL_{n+1+k})^{n+1+k} +(e-2)(n+1+k)\frac{1}{2^{n+k}} (\cL_{n+1+k}^{n+k}\cdot \Sigma_1)\right) \\
%&=2 \frac{2^n e }{2^{n+k+1}} +(e-2)(n+k+1) \frac{\textcolor{blue}{\cL_{n+k+1}^{n+k}\cdot \Sigma_1}}{2^{n+k-1}}\\
&=2 \frac{2^n e }{2^{n+k+1}} +(e-2)(n+k+1) \frac{2^{n-1}}{2^{n+k-1}}\\
&=\frac{n+k+2}{2^k}e-\frac{(n+k+1)}{2^{k-1}}.
\end{align*}
This completes the proof.
\end{proof}
From the above proposition, we have
$$\vol(X_B)=\frac{n+k+2}{2^k(n+1)}p_g(X_B)-\frac{(n^2+2n+2)+(n+2)k}{2^k(n+1)}.$$
The variety $X_B$ is the required variety in Example B.

\section{Related Problems}
Our examples show that the geography, or the relation between canonical volume and genus, depends on $n$, the dimension  of variety and on  $d_1$, the dimension of image of the canonical map as well. It is expected that one can say a lot more if $d_1=n-1$. For example, it is reasonable to ask the following several questions.

\begin{ques}  Suppose that $X$ is a $n$-fold minimal model with Gorenstein  singularities. Suppose that $d_1=\dim \varphi_{K_X}(X) = n-1$. Is  it true that  either $K_X^n \ge 2(p_g-(n-1))$ or $X$ is fibered by curves of genus $2$?
\end{ques}
By a similar argument to Kobayashi's proof, one can prove the following result, partially answers the above question.
\begin{prop}
Let $X$ be a smooth minimal $n$-fold of general type. Suppose that $ d_1=\dim \varphi_{K_X}(X) = n-1$. Then $K_X^n \ge p_g-(n-1).$ More precisely, either $K_X^n \ge 2(p_g-(n-1))$ or $X$ is fibered by curves of genus $\le 1+\lfloor \frac{n}{2} \rfloor$.
\end{prop}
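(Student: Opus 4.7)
The plan is to adapt Kobayashi's strategy (used for $d_1 = n$) to the case $d_1 = n-1$, replacing his generically finite canonical map by a fibration whose general fiber is a curve. First, resolve the base locus of $|K_X|$ via a birational map $\pi : \tilde X \to X$ and write $\pi^* K_X = M + E_0$ with $|M|$ base point free and $E_0$ the fixed part, giving a morphism $\tilde\varphi : \tilde X \to Z \subset \bP^{p_g-1}$ onto the canonical image. By hypothesis $\dim Z = n-1$ and $Z$ is non-degenerate, so $\deg Z \ge p_g - (n-1)$. Stein-factorize $\tilde\varphi = \mu \circ f$ with $f : \tilde X \to Z'$ having connected fibers and $\mu : Z' \to Z$ finite of degree $\nu \ge 1$, and let $g$ denote the genus of a general smooth fiber $F$ of $f$.

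Next I would carry out the key degree count. Cutting $Z$ by $n-1$ general hyperplanes of $\bP^{p_g-1}$ gives $\deg Z$ reduced points, so pulling back under $\tilde\varphi$ yields the cycle identity $M^{n-1} = \nu \cdot \deg Z \cdot [F]$ in $N_1(\tilde X)$. Expanding $(\pi^*K_X)^{n-1} = (M+E_0)^{n-1}$ and using the nefness of $\pi^* K_X$, one obtains
\[
K_X^n = (\pi^* K_X)^n \ge (\pi^* K_X) \cdot M^{n-1} = \nu \cdot \deg Z \cdot (\pi^* K_X \cdot F).
\]
By the projection formula, $\pi^* K_X \cdot F = K_X \cdot \pi_* F$ is a positive integer: otherwise $K_X$ would be numerically zero on a covering family of curves, contradicting that $K_X$ is big. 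This already yields the weak bound $K_X^n \ge p_g - (n-1)$. Moreover, if either $\nu \ge 2$ or $\pi^* K_X \cdot F \ge 2$, the inequality upgrades to $K_X^n \ge 2 \deg Z \ge 2(p_g-(n-1))$, so it remains to treat the case $\nu = 1$, $\pi^* K_X \cdot F = 1$, in which $X$ is fibered by the curves $F$.

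The main obstacle is then producing the genus bound $g \le 1 + \lfloor n/2 \rfloor$ in that remaining case. Adjunction on the smooth curve $F \subset \tilde X$ gives $K_{\tilde X} \cdot F = 2g-2$, and with $K_{\tilde X} = \pi^* K_X + E_\pi$ ($E_\pi$ effective exceptional for $\pi$) this forces $E_\pi \cdot F = 2g - 3$, so a high genus is possible only if $F$ meets the exceptional locus heavily. To turn this into a bound on $g$ I would cut $\tilde X$ by $n-2$ general members of $|M|$, obtaining a smooth surface $S$ fibered by $f|_S : S \to C$ over a curve $C$ of degree $\deg Z$, with general fiber $F$. Writing $K_S = (\pi^* K_X + E_\pi + (n-2) M)|_S$ and relating $K_S^2$ to $K_X^n$ via the surface cut, one compares with $p_g(S)$ through Noether's inequality on the general-type surface $S$, or via the slope inequality for the fibration $S \to C$: the adjunction contribution $(n-2) M|_S$ enters linearly in $n$ and pins $g$ down to at most $1 + \lfloor n/2 \rfloor$. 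This last matching is the technical core and requires care about the singularities of the base curve $C$, running parallel to Kobayashi's three-dimensional argument.
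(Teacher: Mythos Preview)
Your setup and derivation of the weak inequality $K_X^n \ge p_g-(n-1)$ are essentially the paper's: resolve the base locus, use $\deg Z \ge p_g-(n-1)$ for the non-degenerate image, and pair $M^{n-1}$ against the nef class $\pi^*K_X$. (The paper does not Stein-factorize; your factor $\nu$ is already absorbed in its inequality $D'^{\,n-1} \ge (p_g-n+1)\,C$.)

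The gap is the genus bound. Your proposed route through a surface cut and slope/Noether inequalities is not carried out, and it is not how the paper argues. The missing observation is a direct discrepancy estimate: since $\pi$ is a composition of blowups along smooth centers lying in the base locus, an easy induction gives
\[
(n-1)\,E_0 \;\ge\; K_{\tilde X/X}\quad (=E_\pi\ \text{in your notation}),
\]
because at each step the new exceptional divisor enters $K_{\tilde X/X}$ with coefficient $c-1\le n-1$ ($c$ the codimension of the center) and enters the fixed part $E_0$ with coefficient at least $1$. Combined with $M\cdot F=0$, so that $\pi^*K_X\cdot F=E_0\cdot F$, this yields in one line
\[
n\,(\pi^*K_X\cdot F)=(n-1)\,E_0\cdot F+\pi^*K_X\cdot F\;\ge\;K_{\tilde X/X}\cdot F+\pi^*K_X\cdot F=K_{\tilde X}\cdot F=2g-2.
\]
Hence if $g>1+\lfloor n/2\rfloor$ then $2g-2>n$, forcing $\pi^*K_X\cdot F\ge 2$ and therefore $K_X^n\ge 2(p_g-(n-1))$. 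You were in fact one step away: your identity $E_\pi\cdot F=2g-3$ together with $(n-1)E_0\ge E_\pi$ and $E_0\cdot F=\pi^*K_X\cdot F=1$ gives $2g-3\le n-1$ directly. No surface cutting or slope inequality is needed; the whole dichotomy rests on the single estimate $(n-1)E_0\ge K_{\tilde X/X}$.
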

\begin{proof}
We may write $|K_X|=|D|+B$, where $D$ is the mobile part and $B$ is the base divisor.
Let $\pi \colon X'\to X$ be a resolution of $|D|$ so that the proper transform $|D'|$ is free.
That is, $\pi^*|D|=|D'|+B'$ and $\pi^*|K_X|=|D'|+F,$ where $F=\pi^*B+B'$. Let $C$ be a general fibre curve of the morphism $\varphi_{|D'|}$ induced on $X'$.

Since $\pi$ consists of a sequence of blowup along smooth centers, we have $(n-1)B' \ge K_{X'/X}$ and hence $(n-1)F \ge K_{X'/X}$, where $K_{X'/X}=K_{X'}-\pi^* K_X$.
%\footnote{This is the only part that we need the assumption of minimal model is smooth. Suppose that we can have similar estimate for any minimal model, then it would be very nice. A more delicate argument by Koll\'ar is as follows: fix such that one can run $(X',cD')$-MMP relative over $X$. We reach a relative MMP $X_m$. Then $(K_m+cD_m)\cdot l \ge 0$ for all $l \in NE(X_m/X)$. In particular, for all exceptional curves. By Negativity Lemma, one has $cD_{X_m/X} \ge K_{X_m/X}$ }

Notice that $\varphi_{|D'|}$ has image of dimension $n-1$ and hence $D'^{n-1} \ge ((h^0(D')-n+1) C$. Therefore, we have the following
$$\begin{array}{ll}
K_X^n=\pi^*K_X^n & \ge \pi^*K_X^{n-1} \cdot (D'+F) \\
&\ \ \ \vdots \\
                 & \ge \pi^*K_X \cdot D'^{n-1}\\
                 & \ge ((h^0(D')-n+1) \pi^*K_X \cdot C \\
                 &= (p_g(X)-n+1)\pi^*K_X \cdot C.

\end{array} \eqno{(6.1)} $$

Since $K_C=K_{X'}|C$, it follows that $K_{X'}\cdot C = 2g(C)-2 >0$.
Now $$ n \pi^*K_X \cdot C = (n-1) F \cdot C + \pi^*K_X \cdot C \ge K_{X'/X} \cdot C + \pi^*K_X \cdot C = K_{X'} \cdot  C >0. \eqno{(6.2)}$$

Hence $ \pi^*K_X \cdot C  \ge 1$ and therefore $K_X^n \ge p_g(X)-n+1$ by (6.1).

% suffices to show that either $\pi^*K_X \cdot C \ge 2$
%or $g(C) \le 1+\lfloor \frac{n}{2} \rfloor$.
Suppose that $g(C) > 1+\lfloor \frac{n}{2} \rfloor$. Then clearly, $g(C) > 1+\frac{n}{2}$ and hence $2g(C)-2 >n$.
By (6.2), $\pi^*K_X \cdot C >1$ and hence $\pi^*K_X \cdot C \ge 2$. Together with (6.1) , one sees that either $K_X^n \ge 2(p_g-(n-1))$ or $X$ is fibered by curves of genus $\le 1+\lfloor \frac{n}{2} \rfloor$.
\end{proof}

\noindent
\begin{ques}
Suppose that  $X$ is a $n$-fold minimal model of general type with at worst Gorenstein singularities. Suppose furthermore that $d_1=\dim \varphi_{K_X} = n-1$. Does the relation  $$K_X^n \ge \frac{n+1}{n} p_g - \frac{n^2+1}{n}$$ holds?
\end{ques}

\noindent
\begin{ques}
Suppose that  $X$ is a $n$-fold minimal model of general type with at worst Gorenstein  singularities. Suppose  furthermore that $d_1=\dim \varphi_{K_X} = n-1$ and $$K_X^n = \frac{n+1}{n} p_g - \frac{n^2+1}{n}.$$
Is it true that $X$ is a double covering over a variety of minimal degree, similar to  example $X_A$?
\end{ques}

It is known in dimension three that the above equality characterizes the variety $X$ essentially as Kobayashi's example if $p_g\geq7$(cf. \cite{CH}). The same proof partially works in the setting of Question 6.4 with the extra assumption $n-1=2g(C)-2$.

%%--------------------------------------------------------------- References --

\end{document}